\renewcommand{\paragraph}[1]{$\empty$\\\textit{#1}.\hspace{0.5em}}
\numberwithin{equation}{section}
\newtheorem{thm}{Theorem}[section]
\newtheorem{lem}[thm]{Lemma}
\newcommand{\fa}{\forall \,}
\renewcommand{\and}{\qquad\mathrm{and}\qquad}
\renewcommand{\leq}{\,\leqslant\,}
\renewcommand{\geq}{\,\geqslant\,}
\newcommand{\<}{\,<\,}
\renewcommand{\>}{\,>\,}
\renewcommand{\=}{\,=\,}
\newcommand{\Llra}{\Longleftrightarrow}
\newcommand{\lra}{\longrightarrow}
\newcommand{\xra}[2]{\;\;\xrightarrow[#2]{#1}\;\;}
\newcommand{\lms}{\longmapsto}
\newcommand{\bracket}[1]{\llbracket #1 \rrbracket}
\newcommand{\ps}[1]{\left\langle #1 \right\rangle}
\newcommand{\dps}{\displaystyle}
\newcommand{\event}[1]{\left\{#1\right\}}
\newcommand{\vabs}[1]{\left|#1\right|}
\newcommand{\parent}[1]{\left(#1\right)}
\newcommand{\intervalle}[1]{\left[#1\right]}
\newcommand{\norme}[2]{\left\|#1\right\|_{#2}}
\newcommand{\dist}{\dist}
\newcommand{\mtext}[1]{\mbox{\rm #1}}
\newcommand{\textm}[1]{\;\; \mbox{\rm #1}}
\newcommand{\deff}[5]{\begin{tabular}{lccl} $#1$ & $#2$ & $\lra$ & $#4$\\  & $#3$ & $\lms$ & $#5$ \end{tabular}}
\newenvironment{acc}{\left\{\begin{tabular}{ll}}{\end{tabular}\right.}
\renewcommand{\d}{\mathrm{d}}
\newcommand{\Hess}{\mathrm{Hess}}
\newcommand{\C}{\mathrm{C}}
\newcommand{\D}{\mathrm{D}}
\newcommand{\E}{\mathrm{E}}
\newcommand{\G}{\mathrm{G}}
\renewcommand{\H}{\mathrm{H}}
\newcommand{\I}{\mathrm{I}}
\renewcommand{\L}{\mathrm{L}}
\newcommand{\M}{\mathrm{M}}
\renewcommand{\P}{\mathrm{P}}
\newcommand{\Q}{\mathrm{Q}}
\renewcommand{\S}{\mathrm{S}}
\newcommand{\T}{\mathrm{T}}
\newcommand{\V}{\mathrm{V}}
\newcommand{\1}{\mathds{1}}
\newcommand{\EE}{\mathbb{E}}
\newcommand{\NN}{\mathbb{N}}
\newcommand{\RR}{\mathbb{R}}
\newcommand{\CCC}{\mathcal{C}}
\newcommand{\MMM}{\mathcal{M}}
\newcommand{\PPP}{\mathcal{P}}
\newcommand{\tmix}{t_{\text{mix}}}
\newcommand{\Law}{\mtext{Law}}
\renewcommand{\dist}{\mtext{dist}}
\newcommand{\Kullback}{\mtext{Kullback}}
\newcommand{\Wasserstein}{\mtext{Wasserstein}}
\newcommand{\Lip}{\mtext{Lip}}
\newcommand{\Ric}{\mtext{Ric}}
\newcommand{\Var}{\mtext{Var}}
\title{On the cutoff phenomenon for Dyson--Laguerre processes}
\author{Samuel Chan-ashing}
\address{
DMA, École normale supérieure, Université PSL, CNRS, 75005 Paris, France \newline
CEREMADE, Université Paris-Dauphine, PSL, CNRS \newline
CMAP, Inria, CNRS, École polytechnique, Institut Polytechnique de Paris, 91120 Palaiseau, France}
\email{samuel.chan-ashing@ens.psl.eu}
\date{\today}
\keywords{
	Dyson process;
	Markov diffusion processes;
	Interacting Particle System;
	High dimensional phenomenon;
	Random Matrix Theory;
	Cutoff phenomenon;
	Intrinsic Wasserstein distance;
	Stochastic differential geometry;
	Curvature-dimension inequality.
	}
\subjclass[2000]{60J60 (Diffusion processes); 82C22 (Interacting particle systems)}
\begin{document}

\begin{abstract}
We study the convergence to equilibrium in high dimensions, focusing on explicit bounds on mixing times and the emergence of the cutoff phenomenon for Dyson--Laguerre processes. These are interacting particle systems with non-constant diffusion coefficients, arising naturally in the context of sample covariance matrices. The infinitesimal generator of the process admits generalized Laguerre orthogonal polynomials as eigenfunctions.

Our analysis relies on several distances and divergences, including an intrinsic Wasserstein distance adapted to the non-Euclidean geometry of the process. Within this framework, we employ tools from Riemannian geometry and functional inequalities. In particular, we establish exponential decay and derive a regularization inequality for the intrinsic Wasserstein distance via comparison with relative entropy.
\end{abstract}

\maketitle

\setcounter{tocdepth}{1}
\tableofcontents


\section{Introduction and main results}

The \emph{cutoff phenomenon} describes an abrupt transition in the convergence to equilibrium of a parameter-indexed family of stochastic processes for instance, with the dimension as the parameter.
In essence, it reflects a competition between intrinsic relaxation toward equilibrium and initial-data dependence; the parameter determines the regime in which this competition is resolved. When cutoff occurs, the distance to equilibrium remains near its maximum until a characteristic time and then falls to near zero.
This notion was introduced by David Aldous and Persi Diaconis in the 1980s in their study of random walks on finite sets; see for instance \cite{AD86, Dia96}. In the 1990s, Laurent Saloff-Coste extended this framework to Markov diffusion processes, in connection with Nash-Sobolev-type functional inequalities and heat kernel estimates \cite{SC94, SC04}.

A key preliminary concept in characterizing the cutoff phenomenon for general Markov processes is the \emph{product condition}, introduced by Yuval Peres. Guan-Yu Chen and Laurent Saloff-Coste showed in \cite{CSC08} that this condition is sufficient in $\L^p$ distance for $p>1$. However, they also presented two counterexamples, in total variation distance (i.e., $p=1$), due to Aldous and Igor Pak, showing that the product condition is not sufficient for general discrete processes.
Their approach is based on functional inequalities, which provide control over both upper and lower bounds on the mixing time. In a similar vein, Justin Salez recently demonstrated in \cite{Sal21, Sal25} that under an additional curvature assumption, the product condition does imply cutoff for finite Markov chains and diffusions in total variation distance. The proof builds on a two-way bridge between total variation and entropy: Pinsker's inequality bounds total variation via relative entropy, while the converse involves the notion of \emph{varentropy}, controlled by Fisher information.

While these general methods do not provide sharp estimates of the mixing time, Jeanne Boursier, Djalil Chafaï and Cyril Labbé identified in \cite{BCL} the precise cutoff time for the Dyson--Ornstein--Uhlenbeck diffusion in $\RR^n$. In the non-interacting case ($\beta = 0$), the process reduces to the Ornstein--Uhlenbeck process, for which new convergence results are also obtained. Previous analyses of its equilibrium behavior include \cite{Lac05, BHP20a, BHP20b, BP20}.
These dynamics belong to the class of \textit{Dyson diffusions}, a family of interacting particle systems arising from random matrix theory. We pursue this line of work by studying integrable Dyson-type dynamics associated with the $\beta$-Laguerre ensembles. These dynamics arise from the eigenvalue evolution of the Wishart process, which can be seen as the analog of the Dyson--Ornstein--Uhlenbeck process for covariance matrices. Such processes were notably studied by Marie-France Bru and Ezéchiel Kahn; see \cite{Bru89, Bru91, JK20, Kah21}. In the absence of interaction ($\beta = 0$), they reduce to the Cox--Ingersoll--Ross process, also known as the squared Bessel process.

We explore the cutoff phenomenon with respect to the total variation distance, the Kullback--Leibler divergence, the \( \L^2 \) distance, and the intrinsic Wasserstein distance. In particular, we do not consider the Hellinger distance, as it can be compared to the total variation distance (see \cite{BCL}), nor the chi-squared (\(\chi^2\)) distance, which coincides with the \( \L^2 \) distance. Moreover, the case of \( \L^p \) distances with \( p > 1 \) can be handled by interpolation using the results obtained for the \( \L^2 \) distance. Finally, for the Fisher distance, a lower bound can be easily derived using curvature-based arguments.
These distances and divergences are related through several functional inequalities: the total variation distance and the Kullback--Leibler divergence are linked via the Pinsker inequality and its converse, as in \cite{Sal25}; and the Kullback--Leibler divergence can be bounded in terms of the $\L^2$ norm, as in \cite{BCL}.

A structural explanation of the cutoff phenomenon for certain overdamped Langevin diffusions with positive curvature in Euclidean space is given by Djalil Chafaï and Max Fathi in \cite{CF24}. Their results extend to a broad class of models with convex interactions, beyond the Gaussian and product settings.
In particular, they obtain a regularization estimate for the Kullback--Leibler divergence and the Fisher information through the Euclidean Wasserstein distance. We will show that this regularization is compatible with the intrinsic Wasserstein distance, which in turn provides a useful tool for estimating the mixing time.

\subsection{The Dyson--Laguerre process}

In what follows, we study the \textit{Dyson--Laguerre process} (DL), defined as the solution \( X^n\) to the stochastic differential equation (SDE)
\begin{equation}\label{eq:DL}
\d X_{t}^{i,n} \= \sqrt{2  X_{t}^{i,n}} \, \d  B^i_t + \parent{\alpha_n -  X_{t}^{i,n} + \; \frac{\beta}{2} \sum_{j \neq i}\frac{ X_{t}^{i,n}+ X_{t}^{j,n}}{ X_{t}^{i,n}- X_{t}^{j,n}}}\d t,  \qquad  X_0 \= x_0^n \in \RR_+^n,
\end{equation}
where the parameters satisfy $n\geq 1$ and $\alpha_n,\beta\geq0$.

\paragraph{Existence and uniqueness}
The system may blow up, either due to degeneracies of the diffusion coefficients at the boundary (where they may vanish), or as a consequence of particle collisions. To avoid such pathologies, we focus on a parameter regime ensuring well-posedness of the dynamics.

Each coordinate evolves as a Cox--Ingersoll--Ross process (see \eqref{eq:CIR}), and they repel each other according to a Coulomb-type interaction. Under the ordering condition $0 \leq x_0^{1,n} \< \cdots \< x_0^{n,n}$, it is proved in \cite[Corollary 8]{GM13} that for all $\beta \geq 1$, the system \eqref{eq:DL} admits a unique global strong solution with no collisions occurring at any finite time.

Existence and collision questions for $\beta \geq 1$ are addressed in \cite[Theorem 2.2]{GM14}. Broader ranges of the parameter $\alpha_n - (n-1)\beta/2$ are treated in \cite[Theorem 2.4]{JK20}. The analysis is in the spirit of L. Chris G. Rogers and Zhan Shi \cite{RS93} for symmetric matrices, who investigated the $n \to \infty$ asymptotics of the empirical measure of particle systems whose interaction is the derivative of a potential.
More precisely, it is shown that \eqref{eq:DL} admits a strong, pathwise unique solution on $[0,+\infty)$ with no particle collisions whenever
\begin{equation}\label{eq:existence}
\beta \geq 1 \and \fa n \geq 1, \quad \alpha_n -(n-1)\frac{\beta}{2} \> 1.
\end{equation}
Although cutoff is expected under broader conditions, we will work throughout under the standing assumption \eqref{eq:existence}.

\paragraph{Matrix cases}
Let us focus here on the real case \( \beta = 1 \), where $\alpha_n$ is a positive even integer. Analogous results hold in the complex case with \( \beta = 2 \). Let $n,m\in\NN^\ast$ with $n\leq m$, and consider a \textit{rectangular Ornstein--Uhlenbeck process} $( M_t)_{t \geq 0}$, taking values in the space of $n\times m$ real matrices, with independent Ornstein--Uhlenbeck (OU) entries. This process satisfies the SDE
\begin{equation}\label{eq:rOU}
    \d  M_t \= \sqrt{\frac{m}{2}} \d  W_t - \frac{1}{2}  M_t \d t,
\end{equation}
where $ W$ is an $n\times m$ matrix-valued standard Brownian motion. The covariance matrix $ M_t^\dag  M_t$, where $\dag$ denotes the conjugate transpose, is known as the \textit{Wishart process}; see \cite{Bru91} for further details. This is the covariance-matrix version of Freeman J. Dyson's observation \cite{Dys62}.

\noindent Let us now consider, for each $t\ge0$, the eigenvalues of $ M_t^\dag  M_t$. A remarkable fact from \cite[Theorem 1]{Bru91} is that the spectrum evolves as a Dyson--Laguerre particle system \eqref{eq:DL} with parameters
\[\alpha_n \= 2 m \quad\mtext{and}\quad \beta\=1,\]
and remains, almost surely, in the convex domain of ordered nonnegative coordinates
\[X_t^n \in \D_n \;:=\; \event{x \in \RR^n :\; 0 \leq x_1 \< \cdots \< x_n}.\]
The eigenvalues thus evolve as ordered coordinates and form a Coulomb gas. As shown in \cite{JK20}, this confinement holds for general values of \( \beta \).

\paragraph{Invariant measure}
Since we are observing the eigenvalues of the ``square'' of a rectangular OU process, that is, a sum of Gaussian components, it is natural to expect that the invariant measure resembles a matrix analogue of a chi-squared distribution. This is confirmed in \cite[Section 3.2]{For10}, where Forrester shows that the invariant measure of the associated particle system corresponds to the $\beta$-Laguerre ensemble, in the cases of real symmetric ($\beta=1$), complex Hermitian ($\beta=2$), and quaternion self-dual ($\beta=4$) matrices. This ensemble naturally involves the Gamma distribution. More generally, according to \cite[Proposition 2.8]{JK20}, the DL process \eqref{eq:DL} admits a unique stationary probability measure $\pi_{\beta}^n$, whose density with respect to the Lebesgue measure is given, up to normalization, by
\begin{equation}\label{eq:gc}
\d \pi_{\beta}^{n}(x) \;:=\; \frac{\1_{{(x_1,\ldots,x_n)\in\overline{\D}_n}} }{C_n^\beta} \prod_{i=1}^n x_i^{\alpha_n-(n-1)\frac{\beta}{2}-1} e^{-x_i} \prod_{i > j}(x_i-x_j)^{\beta} \d x_i,\end{equation}
where $C_n^\beta$ is a normalizing constant.
The invariant measure is the Gibbs measure associated with the energy given for $x\in\D_n$ by
\[
\E(x) \;:=\; \sum_{i=1}^n \parent{x_i - \parent{\alpha_n - (n-1) \frac{\beta}{2} - 1} \log(x_i) - \beta \sum_{j \neq i} \log(x_i-x_j)}.
\]
On its natural domain, this energy is convex; consequently, the Coulomb gas $\pi_{\beta}^{n}$ is log-concave.

\subsection{Main results}
 
The mixing time is defined for any distance or divergence ($\dist$) by 
\[
\tmix^n(\varepsilon) \;:=\; \inf \event{t \geq 0 \;:\; \dist \parent{\Law(X_t^n) \mid \pi_{\beta}^{n}} \leq \varepsilon}.
\]
For probability measures $\mu$ and $\nu$ on the same space, their total variation distance is
\[
\norme{\mu-\nu}{\T\V} := \sup_A\vabs{\mu(A)-\nu(A)} \in [0,1],
\]
the $\L^p$ distance for $p>1$ is defined by
\[
\norme{\mu-\nu}{\L^p(\nu)} := \norme{\frac{\d \mu}{\d \nu} - 1}{p} \in [0,+\infty],
\]
(when $\nu$ is the invariant measure $\pi_\beta^n$, we denote, for brevity, $\norme{\mu-\nu}{p}$), and the relative entropy (Kullback--Leibler divergence) is defined by
\[
\Kullback(\mu \mid \nu) := \int \frac{\d \mu}{\d \nu} \log \parent{\frac{\d \mu}{\d \nu}} \d \nu \= \int \log \parent{\frac{\d \mu}{\d \nu}} \d \mu \in [0,+\infty],
\]
with the convention $\norme{\mu-\nu}{\L^p(\nu)} = \Kullback(\nu \mid \mu) = +\infty$ if $\nu$ is not absolutely continuous with respect to $\mu$.
The \textit{intrinsic Wasserstein distance} is obtained from Section \ref{sec:iwd} and Subsection \ref{sec:wgdl}, and is given by
\begin{equation}\label{eq:iwd}
\Wasserstein_{}(\mu, \nu) \;:=\; 2 \sqrt{\inf_{( X, Y)} \EE\intervalle{\sum_{i=1}^n \parent{ \sqrt{ X_i}-\sqrt{ Y_i}}^2}}.
\end{equation}
In what follows, we set
\[\max{} \;:=\; \begin{acc}
	$1$ & for $\dist \= \T\V$,\\
	$+\infty$ & for $\dist \in \{\L^2, \Kullback, \Wasserstein_{}\}$.
\end{acc}\]
Using Pinsker’s inequality and its reverse form (see e.g., \cite{Sal25}), we can switch between total variation and Kullback--Leibler divergence when deriving upper or lower bounds, choosing whichever is more convenient.

\paragraph{The real matrix case ($\beta=1$)} Let us make this observation in the case of real matrices, i.e., for \( \beta = 1 \), where the associated Gibbs measure is the $\beta$-Laguerre ensemble. Recall that, in this matrix setting, the Dyson--Laguerre process is obtained with $\alpha_{n,m} \= 2 m$. To derive an upper bound in this case, we project the DL process onto a rectangular OU process.

\begin{thm}[Cutoff for the real matrix case ($\beta=1$)]\label{thm:cmc}
Let ${( X^n_t)}_{t\geq0}$ be the Dyson--Laguerre process \eqref{eq:DL} started at $x_0^n$ and resulting from the rectangular Ornstein--Uhlenbeck process \eqref{eq:rOU}.
Then for $\dist \in \{ \T\V, \L^2, \Kullback, \Wasserstein_{}\}$ and for all $\varepsilon \in (0,1)$,
\[\lim_{n\to\infty} \dist(\Law( X^n_{t_n})\mid \pi_1^n) \=\begin{cases}
	\max & \text{if $t_n=(1-\varepsilon)c_n$},\\
	0 & \text{if $t_n=(1+\varepsilon)c_n$},
\end{cases}\]
assuming that $c_n \;:=\; \log\parent{\frac{1}{n} \sum_{i=1}^{n} x_{0}^{i,n}} \vee \log n \xra{}{n\to\infty} +\infty$.

For $\dist \= \Wasserstein_{}$, the same conclusion remains valid, but the critical time $c_n$ is such that $\log\parent{\frac{1}{n} \sum_{i=1}^{n} x_{0}^{i,n}} \vee \log n \leq c_n \leq \log\parent{\sum_{i=1}^{n} x_{0}^{i,n}} \vee \log n$.

In particular, when $\log\parent{\sum_{i=1}^{n} x_{0}^{i,n}} = O( \log n)$, we recover the result with a mixing time of order $c_n \= \log n$.
\end{thm}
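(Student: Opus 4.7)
The plan is to exploit the matrix representation \(X_t^n = \mathrm{spec}(M_t^\dag M_t)\) to transfer convergence estimates from the rectangular Ornstein--Uhlenbeck process to the Dyson--Laguerre process. The spectral map \(\Psi : M \mapsto \mathrm{spec}(M^\dag M)\) intertwines the dynamics, \(\Psi_\# \Law(M_t) = \Law(X_t^n)\), and their equilibria, \(\Psi_\# \pi_{OU} = \pi_1^n\). Upper bounds are obtained by projecting from the Gaussian matrix process, where explicit computations are available; lower bounds are obtained by testing the DL process against a scalar observable whose dynamics collapse to a tractable one-dimensional SDE.

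\textbf{Upper bound.} By the data-processing inequality applied to \(\Psi\),
\[\dist(\Law(X_t^n)\mid\pi_1^n) \;\leq\; \dist(\Law(M_t)\mid\pi_{OU}) \qquad \text{for } \dist \in \{\T\V,\,\Kullback\}.\]
The right-hand side is a closed-form Gaussian quantity (explicit KL between two Gaussians, then Pinsker for TV). Using \(\|M_0\|_F^2 = \sum_i x_0^{i,n}\) and the proportionality \(m \asymp n\) enforced by \(\alpha_n = 2m\) and \eqref{eq:existence}, this yields the upper bound at time \((1+\varepsilon)c_n\). For the \(\L^2\) distance, combine the KL bound with hypercontractivity of the DL semigroup: since the energy \(\E\) in \eqref{eq:gc} is convex on \(\D_n\), the Bakry--Émery \(\Gamma_2\) criterion gives a log-Sobolev inequality for \(\pi_1^n\) with constant bounded below uniformly in \(n\); Gross's hypercontractive estimate then converts KL at time \(t\) into \(\L^2\) at time \(t + O(1)\), absorbed by the \((1+\varepsilon)\) factor. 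For the intrinsic Wasserstein distance, couple \(M_t\) synchronously with a stationary copy \(N_t \sim \pi_{OU}\); by the Hoffman--Wielandt inequality for singular values, \(\sum_i(\sqrt{X_i} - \sqrt{Y_i})^2 \leq \|M - N\|_F^2\), hence \(\Wasserstein_{}(\Law(X_t^n),\pi_1^n) \leq 2\,W_2^{\mathrm{Eucl}}(\Law(M_t), \pi_{OU})\), which is again a Gaussian computation.

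\textbf{Lower bound.} Consider the sum observable \(S_t := \sum_{i=1}^n X_t^{i,n}\). Summing \eqref{eq:DL} and noting that the Coulomb term \(\sum_{i\neq j}(x_i + x_j)/(x_i - x_j)\) vanishes by antisymmetry, \(S_t\) is a one-dimensional CIR process,
\[dS_t = (n\alpha_n - S_t)\,dt + \sqrt{2 S_t}\,dB_t,\]
with invariant Gamma law of mean \(n\alpha_n\) and variance of the same order. Hence \(\EE[S_t] - n\alpha_n = (S_0 - n\alpha_n) e^{-t}\) and \(\Var(S_t) = O(n\alpha_n)\). For \(t = (1-\varepsilon)c_n\), the bias \(|S_0 - n\alpha_n| e^{-t}\) is much larger than \(\sqrt{n\alpha_n}\); Chebyshev's inequality applied to \(S\) under both \(\Law(X_t^n)\) and \(\pi_1^n\) shows that the two laws concentrate on disjoint half-lines for \(S\), forcing the total variation to tend to \(1\). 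Pinsker's inequality and the elementary bound \(\|\mathrm{d}\mu/\mathrm{d}\nu - 1\|_{\L^2(\nu)} \geq 2\|\mu - \nu\|_{\T\V}\) propagate this to \(\Kullback\) and \(\L^2\). For \(\Wasserstein_{}\), Kantorovich--Rubinstein duality applied to a suitably rescaled \(1\)-Lipschitz observable for the metric from \eqref{eq:iwd} (e.g.\ an appropriate multiple of \(x \mapsto \sum_i \sqrt{x_i}\)) transfers the bias on \(S_t\) to a lower bound on the intrinsic Wasserstein distance.

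\textbf{Main obstacle.} The technical heart of the upper bound is the \(\L^2\) estimate, which needs a log-Sobolev constant for \(\pi_1^n\) bounded below uniformly in \(n\); this is precisely what the Bakry--Émery criterion delivers from the convexity of \(\E\). A secondary point is that the synchronous-coupling upper bound for Wasserstein only places the critical time within the interval \([\log((1/n)\sum_i x_0^{i,n})\vee\log n,\,\log(\sum_i x_0^{i,n})\vee\log n]\), which is exactly the bracket appearing in the statement of the theorem.
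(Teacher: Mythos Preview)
Your overall architecture matches the paper's: contraction from the rectangular OU for the upper bound, and the sum observable (i.e.\ the first eigenfunction) plus Chebyshev for the lower bound. Two steps, however, are not justified as written.

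\textbf{The $\L^2$ upper bound.} You propose to upgrade a KL bound to an $\L^2$ bound via hypercontractivity. But hypercontractivity says $\|P_s f\|_q \leq \|f\|_p$ for $q>p$; it lets you go from $\L^2$ to higher $\L^p$, not from entropy to $\L^2$. There is no general inequality of the form $\|\mu-\pi\|_{\L^2(\pi)} \leq C\cdot \Kullback(\mu\mid\pi)$, and a dimension-free $\|P_s\|_{1\to 2}$ bound (which would do the job) is exactly what fails in high dimension. The paper avoids this entirely: it applies the data-processing inequality directly in $\L^2$ (Lemma~\ref{lem:contraction} holds for $\L^2$ as well), then tensorizes via $\|\mu^{\otimes nm}-\nu^{\otimes nm}\|_2^2 = (1+\|\mu-\nu\|_2^2)^{nm}-1$ and plugs in the explicit Gaussian $\L^2$ formula for a single OU coordinate.

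\textbf{The Wasserstein lower bound.} Kantorovich--Rubinstein with the test function $x\mapsto\sum_i\sqrt{x_i}$ would require controlling $\EE[\sum_i\sqrt{X_t^i}]$, not $\EE[S_t]=\EE[\sum_i X_t^i]$. The process $\sum_i\sqrt{X_t^i}$ is not a CIR and its mean is not explicit (It\^o on $\sqrt{\cdot}$ brings in $\sum_i 1/\sqrt{X_t^i}$ terms), so ``transferring the bias on $S_t$'' is not automatic. The paper instead uses the regularization inequality $\Kullback(\mu\P_t\mid\pi)\leq \frac{e^{-t}}{4(1-e^{-t})}\Wasserstein_{}^2(\mu,\pi)$ (Lemma~\ref{lem:reg}): since $\Kullback$ diverges at $(1-\varepsilon)c_n$, so must $\Wasserstein_{}$.

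A minor remark: your Hoffman--Wielandt step $\sum_i(\sqrt{X_i}-\sqrt{Y_i})^2\leq\|M-N\|_F^2$ is actually the sharp singular-value form and is tighter than the coordinate-wise bound the paper records; this is fine, but be aware the paper carries an extra factor $\sqrt{n}$ at that step, which is why its Wasserstein window is wider than you might expect.
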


\paragraph{Cutoff in the general case}
The upper bound in the general case is obtained using the curvature-dimension condition and a regularization argument. The crucial step in the proof is the regularization result stated in Lemma \ref{lem:reg}, which relies on the intrinsic framework developed in Appendix \ref{sec:iwd}.

\begin{thm}[Universal cutoff]\label{thm:CDL}
Let ${( X^n_t)}_{t\geq0}$ be the Dyson--Laguerre process \eqref{eq:DL} started at $x_0^n$. Then for $\dist \in \{ \T\V, \Kullback, \Wasserstein_{}\}$ and for all $\varepsilon \in (0,1)$, there exists $c_n$ such that
\[\lim_{n\to\infty} \dist(\Law( X^n_{t_n})\mid \pi_{\beta}^{n}) \=\begin{cases}
	\max & \text{if $t_n=(1-\varepsilon)c_n$},\\
	0 & \text{if $t_n=(1+\varepsilon)c_n$},
\end{cases}\]
and, assuming that the following lower bound tends to infinity as $n\to\infty$,
\[
\log \parent{\frac{1}{\sqrt{\alpha_n n}} \vabs{\sum_{i=1}^{n} x_{0}^{i,n}}} \vee \log \parent{\sqrt{\alpha_n n}} \leq c_n \leq \log\vabs{\sum_{i=1}^{n} x_{0}^{i,n}} \vee \log\parent{\alpha_n n}.
\]
In particular, if $\limsup_n \parent{\frac{1}{n} \sum_{i=1}^n x_0^{i,n} - \alpha_n} \leq 0$, then the mixing time is
\[
\frac{1}{2} \log \parent{\alpha_n n} \leq c_n \leq \log\parent{\alpha_n n}.
\]
\end{thm}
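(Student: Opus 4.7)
The proof splits into matching upper and lower bounds on the mixing time, each requiring a different toolbox.

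\textbf{Upper bound via Wasserstein contraction and regularization.} The strategy is to first contract in the intrinsic Wasserstein distance, then upgrade to $\Kullback$ and $\T\V$. Through the Itô substitution $Y_i = \sqrt{X_i}$ the diffusion becomes driftless in the noise and picks up a linear mean-reverting drift $-Y_i/2$ from the confining $-X_i$ term, together with a convex singular drift coming from the Coulomb and logarithmic contributions, the convexity being inherited from that of the energy $\E$ on $\D_n$. This furnishes a curvature--dimension inequality $CD(1/2,\infty)$ in the intrinsic geometry, which by Bakry--Émery yields
\[
\Wasserstein(\Law(X_t^n), \pi_\beta^n) \leq e^{-t/2}\, \Wasserstein(\delta_{x_0^n}, \pi_\beta^n).
\]
The initial distance is controlled via $(\sqrt{x}-\sqrt{y})^2 \leq x+y$ together with stationary moment estimates on $\pi_\beta^n$ (whose invariant trace has mean of order $\alpha_n n$), giving $\Wasserstein(\delta_{x_0^n}, \pi_\beta^n)^2 \lesssim \sum_i x_0^{i,n} + \alpha_n n$. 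Applying the regularization Lemma~\ref{lem:reg} transfers this Wasserstein estimate into the Kullback--Leibler divergence, with Pinsker then supplying the $\T\V$ bound. This yields the announced upper bound
\[
c_n \leq \log\vabs{\sum_i x_0^{i,n}} \vee \log(\alpha_n n).
\]

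\textbf{Lower bound via the trace statistic.} Consider the observable $S_t^n := \sum_{i=1}^n X_t^{i,n}$. The Coulomb interaction vanishes once summed over $i$, by antisymmetry of the kernel $(X^i+X^j)/(X^i-X^j)$ in the pair $(i,j)$, and a quadratic-variation argument shows that $S_t^n$ satisfies the autonomous Cox--Ingersoll--Ross SDE
\[
dS_t^n \= 2\sqrt{S_t^n}\, d\tilde B_t + (\alpha_n n - S_t^n)\, dt,
\]
whose stationary law is Gamma with mean $\alpha_n n$ and variance $2\alpha_n n$, and whose conditional mean satisfies $\EE[S_t^n \mid S_0^n] = e^{-t}S_0^n + (1-e^{-t})\alpha_n n$. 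Since each of our distances is non-increasing under push-forward by a Lipschitz observable (Lipschitz in the intrinsic metric, for $\Wasserstein$, using that $x \mapsto \sqrt{\sum x_i}$ is $1$-Lipschitz with respect to the square-root coordinates), lower bounds on the distance between $\Law(S_t^n)$ and the stationary Gamma law transfer to the full process. A Chebyshev argument, comparing the gap $e^{-t}\vabs{S_0^n - \alpha_n n}$ with the standard deviation $\sqrt{2\alpha_n n}$, forces the $\T\V$ distance to stay near $1$ whenever the gap dominates; this produces
\[
c_n \geq \log\parent{\frac{\vabs{\sum_i x_0^{i,n}}}{\sqrt{\alpha_n n}}} \vee \log\sqrt{\alpha_n n}.
\]

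\textbf{Main obstacle.} The central difficulty is Lemma~\ref{lem:reg}: establishing the short-time regularization from intrinsic Wasserstein to Kullback--Leibler, in the spirit of \cite{CF24} but now in the non-Euclidean square-root geometry, with a non-constant diffusion coefficient and singular Coulomb repulsion. The factor-of-$2$ gap between the upper bound ($\log(\alpha_n n)$) and the lower bound ($\tfrac{1}{2}\log(\alpha_n n)$) is the familiar mismatch between the intrinsic Wasserstein decay rate ($1/2$, stemming from the $-Y/2$ drift after the square-root substitution) and the Euclidean mean-reversion rate of $S_t^n$ ($1$); narrowing it would require sharper curvature or variance estimates beyond the current framework.
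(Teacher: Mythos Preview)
Your overall strategy matches the paper's: the upper bound via $\mathrm{CD}(1/2,\infty)$, intrinsic Wasserstein contraction, and the regularization Lemma~\ref{lem:reg}; the lower bound via the trace observable $S_t^n=\sum_i X_t^{i,n}$ (which the paper also notes is a CIR process, though its main argument phrases this as the spectral lower bound for the first eigenfunction $\varphi_n=\phi_n-\mathrm{const}$). Your justification of the curvature bound through the square-root change of variables is the ``Euclidean viewpoint'' of Section~\ref{sec:epov}; the paper instead verifies $\Gamma_2\ge\tfrac12\Gamma$ by a direct computation (Lemma~\ref{lem:cd}), but either route is legitimate.

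There is, however, a genuine gap. Your upper and lower bounds differ by a factor of two, as you yourself observe. Non-matching bounds on the mixing time do \emph{not} by themselves prove cutoff: they are compatible with the distance lingering at an intermediate value throughout the window $[\mathrm{LB},\mathrm{UB}]$. The theorem asserts the existence of a single $c_n$ at which the transition is sharp, and this requires a separate argument. The paper supplies it by invoking the abstract criterion of \cite[Section~3]{CSC08}: once monotonicity, sub-exponential decay (both following from curvature, Section~\ref{sec:secr}), and the product condition (your lower bound gives precisely $\tmix/\trel\to\infty$) are in hand, cutoff in $\Kullback$ and $\Wasserstein$ follows; for $\mathrm{TV}$ one appeals to Salez's curvature criterion \cite[Corollary~1]{Sal25}. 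You need to add this step---without it, you have only located an interval in which the mixing time lies, not established the abrupt transition that the theorem claims.
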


\paragraph{Organization of the paper}
The DL process admits multiple interpretations, and throughout this paper, we adopt two complementary viewpoints:

Euclidean viewpoint (Section \ref{sec:epov}). The Euclidean perspective arises from a square-root transformation, under which the process naturally becomes a Langevin diffusion (with a constant diffusion coefficient). This yields a Euclidean framework that enables the application of classical tools, but we emphasize that, from the standpoint of generality, the change of variables considered is not optimal. Certain structural properties, such as convexity, need to be preserved under this transformation, which restricts its utility in some analytic settings.
This strategy may not apply in more general settings, for instance, if the transformed Langevin process does not correspond to a log-concave gas.
Indeed, log-concavity is not generally preserved under a change of variables. However, in the present setting, transformation behaves well and this property is retained.

Riemannian viewpoint. The Riemannian perspective interprets the DL process as evolving on a curved space, where the non-constant diffusion coefficient plays the role of a metric. This geometric viewpoint opens directions for analyzing more general diffusions, particularly in cases where a change of variables does not suffice to reduce the problem to a Euclidean setting.
In this context, the universal lower bound stated in Lemma \ref{lem:LBDL} is proved in Section \ref{sec:pulb}, following the spectral approach initiated in \cite{SC94}. The cutoff phenomenon is thus established uniformly across all distances, as summarized in Subsection \ref{ss:krc}, by leveraging the collection of functional inequalities derived throughout the paper. The estimates for the mixing time in the matrix setting (Theorem \ref{thm:cmc}) and in the general setting (Theorem \ref{thm:CDL}) are proved in Sections \ref{sec:pubmc} and \ref{sec:pubgc}, respectively. In particular, the matrix case relies on precise estimates for the mixing time of an Ornstein--Uhlenbeck process with general coefficients, detailed in Appendix \ref{sec:mtou}. The intrinsic Wasserstein distance and its associated functional inequalities are defined and studied in Appendix \ref{sec:iwd}.

\paragraph{Further work}
Ongoing work investigates the cutoff phenomenon for the remaining integrable Dyson-type dynamics, notably those associated with the beta-Jacobi ensembles. This involves a detailed analysis of the curvature of the process and the construction of a suitable intrinsic Wasserstein distance. Further directions include studying the cutoff for the dynamics of $n$ i.i.d. ergodic particles and particle systems with weak interactions.

\section{Additional comments}

\subsection{Further analysis of the Dyson--Laguerre process}

Further details on the noninteracting process, its spectral analysis, and the associated carré du champ operator follow.

\paragraph{The non-interacting case}
The case \(\beta=0\) corresponds to product dynamics, in which particles evolve independently, each according to a one-dimensional Dyson--Laguerre process. In contrast to the Ornstein--Uhlenbeck setting, no simple control over equilibrium distances is available.

More precisely, each coordinate evolves as a Cox--Ingersoll--Ross process: the $i$-th particle satisfies the SDE
\begin{equation}\label{eq:CIR}
\d X_{t}^{i,n} \= \sqrt{2 X_{t}^{i,n}} \, \d  B_{t}^{i,n} + \parent{\alpha_n-X_{t}^{i,n}} \d t, \qquad  X_0^{i,n} \= x_0^{i,n} \in \RR_+,
\end{equation}
where \( \alpha_n \in \mathbb{R}^+ \) denotes a dimension parameter and \(( B^{i,n})_{1\leq i \leq n}\) are independent standard Brownian motions. While collisions between particles may occur, the absence of any repulsive term prevents singularities, ensuring that the dynamics remain well-posed.
As shown by Feller in \cite{Fel51} (see also \cite{dC16, LL97}), the law of $X_{t}^{i,n}$ admits an explicit density involving special functions, notably the modified Bessel function of the first kind. At a fixed time, the Cox--Ingersoll--Ross process follows a non-central chi-squared distribution with \( \alpha_n \) degrees of freedom. Its invariant law is the Gamma distribution $\Gamma\parent{\alpha_n,1}$.
As a result, the invariant measure $\pi_0^n$ of the full system is a product of independent, non-centered Gamma distributions.
This tensorized structure allows for full explicit control over both the dynamics and the invariant law. However, it proves of limited use for our purposes, compared to the Ornstein--Uhlenbeck case in \cite{BCL}.
Indeed, despite the availability of exact expressions, estimating distances between non-centered Gamma laws remains a subtle task. For this reason, we do not treat separately the interacting and non-interacting cases, and instead focus on the DL process as a whole, avoiding any projection.

\paragraph{Spectral analysis}
It is important to note that the generator has a remarkable structure. Indeed, in the non-interacting case the coefficients of the first- and second-order terms are affine functions of the coordinates. Thus one expects the eigenfunctions to be polynomials, and in particular orthogonal polynomials, since the generator is self-adjoint and its eigenfunctions can be chosen orthogonal to each other. This phenomenon was observed by Lassalle \cite{Lasc}.
Here we focus on the general $\beta$ case, which leads naturally to multivariate Laguerre polynomials. Access to explicit eigenfunctions is a significant advantage in the analysis of cutoff.

Let $\L_{\text{sym}}^2(\RR_+^n,\pi_{\beta}^{n})$ denote the Hilbert space of symmetric functions in \( n \) variables \( (x_1, \dots, x_n) \) on the positive quadrant \( \{x_i > 0, 1 \leq i \leq n\} \), square-integrable with respect to the invariant measure \( \pi_{\beta}^{n} \). The infinitesimal generator of the DL process acts on this space as a second-order differential operator
\begin{equation}\label{eq:IG}
\G_{\D\L}(f) \= \sum_{i=1}^{n} x_i \partial_{ii}^2 f + \alpha_n\sum_{i=1}^{n} \partial_i f - \sum_{i=1}^{n} x_i \partial_i f + \frac{\beta}{2} \sum_{i=1}^{n} \sum_{\substack{j=1 \\ j \neq i}}^{n} \frac{x_i+x_j}{x_i - x_j}\partial_i f.\end{equation}
The operator $\G_{\D\L}$ is self-adjoint on $\L_{\text{sym}}^2(\RR_+^n,\pi_{\beta}^{n})$ and preserves the space of symmetric polynomials. Its spectrum is the set of non-positive integers $-\NN$, the eigenspaces are finite-dimensional, and the eigenfunctions are the generalized Laguerre polynomials. The spectral gap is equal to $1$, and the eigenspace corresponding to the eigenvalue $-1$ is generated by an explicitly given eigenfunction, which is an affine function
\begin{equation}\label{eq:efsp}
\varphi_{n}(x) \;:=\; \phi_n(x) - \alpha_n n - \frac{\beta}{2}(n-1)^2 \qquad\mtext{where}\qquad \phi_n(x) \;:=\; x_1 + \cdots + x_n.
\end{equation}
Note that the process is referred to as the \textit{Dyson--Laguerre} process, while its infinitesimal generator is simply called the \textit{Laguerre operator}.

\paragraph{Carré du champ}
Since the diffusion coefficient is non-constant, the carré du champ is no longer the squared Euclidean gradient. Instead we obtain the deformed carré du champ
\begin{equation}\label{eq:cdc}
\Gamma f \= \sum_{i=1}^n x_i (\partial_{i} f)^2.
\end{equation}
Because $\Gamma$ appears systematically (e.g., in the derivation of relative entropy, the log–Sobolev inequality, etc.), Euclidean properties no longer align with those of the diffusion. In particular, the Euclidean geometry is not the right ambient structure for direct analysis of this process. We will therefore adopt a geometry adapted to $\Gamma$ that fits the dynamics in Section \ref{sec:wgdl}.

\paragraph{Alternative parametrization} In a broader setting, one may consider a DL process with arbitrary coefficients.
The process satisfies the following SDE
\[
\d X_{t}^{i,n} \= \sigma \sqrt{ X_{t}^{i,n}} \, \d  B^i_t + \parent{\alpha_n - \lambda  X_{t}^{i,n} + \beta \sum_{j \neq i}\frac{ X_{t}^{i,n}+ X_{t}^{j,n}}{ X_{t}^{i,n}- X_{t}^{j,n}}}\d t,  \qquad  X_0^n \= x_0^n \in \RR_+^n.
\]
The corresponding invariant measure, derived in \cite{JK20}, takes the form
\[
\d \pi_{\beta}^{n}(x) \;:=\; \frac{\1_{{(x_1,\ldots,x_n)\in\overline{\D}_n}} }{C_n^\beta}  \prod_{i=1}^n
x_i^{\parent{\alpha_n -(n-1)\beta}\frac{2}{\sigma^2}-1} e^{- \frac{2\lambda}{\sigma^2} x_i} \prod_{i > j}(x_i-x_j)^{\frac{4\beta}{\sigma^2}} \, \d x_i.
\]

\subsection{Known results about the cutoff}\label{ss:krc}

In this subsection, we gather key results from the literature, that allow us to deduce the existence of a cutoff phenomenon for the DL process.

\paragraph{Cutoff $\L^p$ with $p>1$} The following result follows from \cite[Corollary 3.4]{CSC08}. It asserts that, under a spectral condition and for a suitably chosen initial condition, the dynamics satisfy the so-called \textit{product condition}, which is sufficient to ensure the occurrence of a cutoff in $\L^p$-distance.
In other words, assume the initial condition $x_0^n$ satisfies
\begin{equation}\label{eq:iclp}
\frac{|\varphi_{n}(x_0^{n})|}{\sqrt{\alpha_n n + \frac{\beta}{2}(n-1)^2}} \xra{}{n\to\infty} +\infty,
\end{equation}
then, for all $p \> 1$, there exists a sequence of critical times $c_n \>0$ such that for every $\varepsilon \in (0,1)$,
\[
\lim_{n\to\infty} \norme{\Law( X^n_{t_n}) - \pi_{\beta}^{n}}{p} \=\begin{cases}
	+\infty & \text{if $t_n=(1-\varepsilon)c_n$},\\
	0 & \text{if $t_n=(1+\varepsilon)c_n$}.
\end{cases}
\]
To see this, consider $\varphi_{n}$, the eigenfunction \eqref{eq:efsp} associated with the eigenvalue $-1$ of the generator $\G_{\D\L}$ \eqref{eq:IG}. Then $\vabs{(\P_{t}^{n} - \pi_\beta^{n})(\varphi_{n})(x_0^{n})} \geq e^{-t} |\varphi_{n}(x_0^{n})|$, which provides a lower bound on the convergence in $\L^p$.
Moreover, since $\nabla \varphi_{n} = (1, \ldots, 1)$, one can compute the $\L^2(\pi_{\beta}^{n})$-norm via integration by parts
\[\norme{\varphi_{n}}{2}^2 \= - \ps{\G_{\D\L}\varphi_{n}, \varphi_{n}}_{2} \= \int \sum_i x_i (\partial_i \varphi_{n})^2 \, \d \pi_{\beta}^{n} \= \parent{\EE_{\pi_{\beta}^{n}}(\varphi_n) + \alpha_n n + \frac{\beta}{2}(n-1)^2},\]
and
\begin{equation}\label{eq:espvp}
\EE_{\pi_{\beta}^{n}}(\varphi_n) \= - \EE_{\pi_{\beta}^{n}}(1 \times \G\varphi_{n}) \=  \EE_{\pi_{\beta}^{n}}(\Gamma(1,\varphi_{n})) \=  \EE_{\pi_{\beta}^{n}}(x (\nabla 1) \cdot(\nabla \varphi_{n})) \= 0.
\end{equation}
Therefore, choosing $x_0^n$ as in \eqref{eq:iclp}, we deduce that the DL process undergoes a cutoff in $\L^2(\pi_{\beta}^{n})$-norm for the set of initial conditions \(\S_0^n = \{\delta_{x_0^n}\}\). Note that while the product condition is known to be sufficient for cutoff, its necessity, especially for general diffusions, remains an open question.

The main strength of this corollary is that it applies whenever the eigenfunction associated with the spectral gap is explicit. Moreover, the cutoff result extends to all \( p > 1 \) via interpolation. However, it does not yield an explicit expression for the mixing time, which remains the central focus of this work.

\paragraph{Cutoff for TV (for $p=1$)}
The previous result does not apply to the case \( p = 1 \), but it is still expected to hold for diffusions. A recent result in \cite{Sal25} shows that the product condition is also sufficient to ensure cutoff in total variation, i.e., for \( p = 1 \), provided the system has nonnegative curvature. This is the case for the DL process \eqref{eq:DL} (see Lemma \ref{lem:cd} for the curvature and Lemma \ref{lem:LBDL} for the product condition).

\section{Euclidean and Riemannian points of view of the Dyson--Laguerre process}

When dealing with a non-constant diffusion coefficient, as in \eqref{eq:IG}, two conceptual approaches naturally arise. On the one hand, if one is comfortable with geometric techniques, there is no need to change variables: the analysis proceeds directly in the intrinsic geometry of the process. On the other hand, adopting a Euclidean point of view, one may choose to perform a change of variables, thus absorbing the inhomogeneity of the diffusion coefficient and recovering a Euclidean structure amenable to classical tools.

\subsection{Euclidean point of view of the Dyson--Laguerre process} \label{sec:epov}

The change-of-variables strategy is both natural and effective here. It allows one to eliminate the non-constant nature of the diffusion coefficient  and thus remain within a Euclidean framework. For the Dyson--Laguerre process \eqref{eq:DL}, this transformation is performed via the square-root mapping $Y_{t}^{i,n} \;:=\; 2\sqrt{ X_{t}^{i,n}}$, leading to what is referred to as the \textit{Euclidean Dyson--Laguerre process} (EDL). We detail its Euclidean analysis in this section. In the matrix case, the problem reduces to the singular values of a rectangular OU matrix, which are natural observables.

The resulting SDE after the change of variables takes the form
\begin{equation}
\d  Y_{t}^{i,n} \= \sqrt{2} \d  B_{t}^{i,n} + \parent{\frac{2 \alpha_n-1}{ Y_{t}^{i,n}} - \frac{ Y_{t}^{i,n}}{2} + \frac{\beta}{ Y_{t}^{i,n}} \sum_{j \neq i} \frac{( Y_{t}^{i,n})^2+( Y_{t}^{j,n})^2}{( Y_{t}^{i,n})^2-( Y_{t}^{j,n})^2}} \d t,
\end{equation}
and the associated infinitesimal generator is 
\begin{equation}\label{eq:IGEDL}
\G_{\E\D\L}(f) \= \sum_{i=1}^{n} \partial_{ii}^2 f + (2\alpha_n-1)\sum_{i=1}^{n} \frac{1}{y_i} \partial_i f - \frac{1}{2} \sum_{i=1}^{n} y_i \partial_i f + \beta \sum_{i, j \neq i} \frac{1}{y_i} \frac{y_i^2+y_j^2}{y_i^2 - y_j^2}\partial_i f.
\end{equation}

A first observation is that the ground-state eigenfunction remains polynomial, as it is given
\[\widetilde{\varphi}_n(y) \= \sum_{i=1}^n y_i^2 - 4 n \alpha_n.\]
The interacting particle system described is no longer a Laguerre gas in the strict sense. Nevertheless, it retains enough structure to be analyzed. The invariant measure admits an explicit energy functional
\[
\widetilde{\E}(y) \;:=\; \sum_{i=1}^n \parent{\frac{1}{4} y_i^2 - \parent{\frac{\alpha_n}{2} - (n-1) \frac{\beta}{4} - \frac{1}{4}} \log(y_i) - \frac{\beta}{4} \sum_{j \neq i} \log(y_i^2-y_j^2)}.
\]
Notably, using that $\log(a^2-b^2) = \log(a - b) + \log (a+b)$ for $a>b>0$, the energy is convex. However, $\widetilde{\E}$ is not $\rho$-convex in the sense of \cite{CF24}. Indeed, the Hessian entries are
\[\Hess(\widetilde{\E})_{ii} \= \frac{1}{2} + \frac{\alpha_n - (n-1) \frac{\beta}{2} - \frac{1}{2}}{2} \frac{1}{y_i^2} + \beta \sum_{j \neq i} \frac{y_i^2+y_j^2}{(y_i^2-y_j^2)^2}, \quad \Hess(\widetilde{\E})_{ij} \= - \beta \frac{y_i y_j}{(y_i^2-y_j^2)^2}.\]
Consequently, the transformation preserves the log-concavity of the invariant measure.

Behind this transformation lies a broader question, namely the stability of the curvature-dimension condition $\C\D(\rho,\infty)$ condition under smooth diffeomorphisms. In the case of the EDL process, this property happens to be preserved. However, this equivalence should not be expected to hold in full generality. A computation of the carré du champ yields
\begin{equation}\label{eq:gammaEDL}
\Gamma^{\E\D\L}(f) \= \vabs{\nabla f}^2 \= \sum_{i=1}^{n} (\partial_{i} f)^2.
\end{equation}
Recall that in the Euclidean setting we have $\Gamma_{2}^{\E\D\L}(f) \= \norme{\Hess f}{2}^2 + (\nabla f)^\top (\Hess \, \widetilde{\E}) (\nabla f)$ for the computation of the $\Gamma_2$-operator, so that
\begin{equation}\label{eq:gamma2EDL}
\begin{aligned}
\Gamma_{2}^{\E\D\L}(f) &\=& \norme{\Hess f}{2}^2 + \frac{1}{2} \Gamma f + \frac{1}{2}\parent{\alpha_n - (n-1) \frac{\beta}{2} - \frac{1}{2}}\sum_{i} \frac{(\partial_i f)^2}{y_i^2}\\
	&& + \; \; \frac{\beta}{2} \sum_{j < i} \frac{(y_i (\partial_i f) - y_j (\partial_j f))^2}{(y_i^2-y_j^2)^2} + \frac{(y_i (\partial_j f) - y_j (\partial_i f))^2}
	{(y_i^2-y_j^2)^2}.
\end{aligned}
\end{equation}

We summarize below the key properties of the two processes under consideration. The comparison involves geometric and functional analytic features, including curvature-type bounds, contraction properties, and spectral structure.

\begin{table}[h!]
\centering
\begin{tabular}{@{} | l || c | c | @{}}
\toprule
\textbf{Property} & \textbf{Dyson--Laguerre Process} & \textbf{Euclidean Dyson--Laguerre Process} \\
\midrule
Diffusion coefficient & Square-root ($\propto \sqrt{x_i}$) & Constant \\
Non-interacting term & Gamma distribution & Normal distribution \\
Interacting term & $\beta \sum_{i>j} \log(x_i-x_j)$ & $\beta \sum_{i>j} \log(y_i^2-y_j^2)$ \\
Energy convexity & Convex & Convex \\
First eigenfunction & $\sum_{i} x_i$ + constant & $\sum_{i} y_i^2$ + constant \\
Curvature-dimension & $\C\D\parent{1/2, \infty}$ & $\C\D\parent{1/2, \infty}$ \\
\bottomrule
\end{tabular}
\caption{Comparison of properties between the DL and Euclidean DL processes.}
\end{table}
 
Comparing distances and divergences between the DL and EDL processes is relatively straightforward. For total variation, Kullback--Leibler, and $\L^2$ distances, we obtain equality by exploiting the contraction properties established in Lemma \ref{lem:contraction}. However, for the Wasserstein distance, the non-Lipschitz change of variables prevents a direct comparison between the Euclidean Wasserstein distances of the two processes.
Nevertheless, as discussed in Appendix \ref{sec:iwd}, the Wasserstein distance naturally adapted to the DL process is the intrinsic Wasserstein distance defined in \eqref{eq:iwd}. Importantly, this intrinsic distance evaluated along the DL process coincides with the standard Euclidean Wasserstein distance evaluated along the EDL process.
In particular, we note that the regularization inequality of Lemma \ref{lem:reg} can be recovered in this framework by following the Euclidean argument of \cite[Lemma 4.2]{BGL14}, followed by a comparison of relative entropy via contraction. This observation allows us to conclude that it is sufficient to study the EDL process to deduce the cutoff for the DL process with respect to all these distances.

\subsection{Riemannian point of view of the Dyson--Laguerre process} \label{sec:wgdl}

The non-Gaussian nature of this log-gas makes the derivation of functional inequalities in the standard (Euclidean) Wasserstein distance more delicate. In particular, one should not expect a (Euclidean) Talagrand-type inequality to hold in this setting, as such an inequality would imply sub-Gaussian concentration for functions that are Lipschitz with respect to the Euclidean metric, valid in the Gaussian case (see, e.g., \cite{BG99}) but not here.
Moreover, in the matrix setting, no Euclidean contraction inequality can be inferred: the spectral map sending a Wishart process to its eigenvalues (DL process) is not Lipschitz with respect to the Euclidean metric.

A more robust alternative to the previous section, which does not always ensure that the resulting process has a log-concave invariant law, is to use a Wasserstein distance intrinsically adapted to the geometry of the process. As detailed in Appendix \ref{sec:iwd}, this construction relies on the diffusion coefficients, which induce a Riemannian metric on a subset of $\RR^n$. In doing so, the state space inherits a geometric structure naturally aligned with the stochastic dynamics.

Recall that the diffusion coefficient of the Dyson--Laguerre process has components $\sigma : x_i \lms \sqrt{2 x_i}$, and that the corresponding carré du champ operator is \eqref{eq:cdc}.
In this setting, we consider the diagonal metric $g$ defined by $g_{ii}(x) \= 1/x_i$  and $g_{ij}(x) \= 0$ if $i\neq j$. This corresponds to the diagonal setting with the function $a : x \lms 1/\sqrt{x}$ on $\RR_+^\ast$. Since an antiderivative of $a$ is $A : x \lms 2 \sqrt{x}$, the associated Riemannian distance for the Dyson--Laguerre process reads
\begin{equation}
\fa x,y \in (\RR_+^\ast)^n \;:\quad d_{g}(x,y) \;:=\; 2 \sqrt{\sum_{i=1}^n \parent{\sqrt{x_i}-\sqrt{y_i}}^2}.
\end{equation}
Note that the geodesic $\gamma$ connecting $x$ to $y$ is given for $t\in[0,1]$ by $\gamma_i(t) \= \parent{t \sqrt{y_i} + (1-t)\sqrt{x_i}}^2$. The intrinsic Wasserstein distance of order $r\geq0$ is then defined for probability measures $\mu$ and $\nu$ on the same space by
\begin{equation}
\Wasserstein_{g,r}(\mu, \nu) \= 2\parent{\inf_{( X, Y)} \EE\intervalle{\parent{\sum_{i=1}^n \vabs{\sqrt{ X_i}-\sqrt{ Y_i}}^2}^{r/2}}}^{1/r}.
\end{equation}
where the infimum runs over all couplings $(X, Y)$ of $\mu$ and $\nu$.
We emphasize that the Lipschitz functions associated with this carré du champ operator are not linear: asymptotically for large \(x\), they behave like the square root of \(x\).

\section{Proof of the universal lower bound (Lemma \ref{lem:LBDL})}\label{sec:pulb}

In this section, we derive the universal lower bound by adapting the spectral method developed in \cite{SC94}, which estimates the $\L^2$-distance through spectral decomposition, and derives a total variation bound via the Bienaymé--Chebyshev inequality.

\begin{lem}[Universal lower bound]\label{lem:LBDL}
Let $(X^n_t)_{t\geq0}$ be the Dyson--Laguerre process \eqref{eq:DL} started at $x_0^n$. Then for $\dist \in \{ \T\V, \L^2, \Kullback, \Wasserstein_{}\}$
and for all $\varepsilon\in (0,1)$,
\[\lim_{n\to\infty}\dist(\Law( X^n_{(1-\varepsilon)c_{n}})\mid \pi_{\beta}^{n}) \= \max{}, \qquad\mtext{if}\quad c_{n} \;:=\; \log \parent{\frac{\vabs{\varphi_n(x_0^n)}}{\sqrt{\alpha_n n}}} \xra{}{n\to\infty} +\infty.\]
In particular, under these assumptions, the product condition is satisfied.
\end{lem}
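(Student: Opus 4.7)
The argument rests entirely on the affine eigenfunction $\varphi_n$ of $\G_{\D\L}$ associated with the spectral gap $-1$, recalled in \eqref{eq:efsp}, which satisfies $\P_t^n\varphi_n=e^{-t}\varphi_n$, $\EE_{\pi_\beta^n}[\varphi_n]=0$, and $\norme{\varphi_n}{2}^2$ is of order $\alpha_n n$ under the standing hypothesis \eqref{eq:existence} (where $\alpha_n n$ absorbs the $\beta(n-1)^2/2$ correction appearing in \eqref{eq:espvp}). Set $\mu_t:=\Law(X_t^n)$ and $m_t:=e^{-t}\varphi_n(x_0^n)$, so that the assumption $c_n\to\infty$ is exactly $|m_t|/\sqrt{\alpha_n n}\to\infty$ at $t=(1-\varepsilon)c_n$. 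The $\L^2$ lower bound is then immediate from duality: testing the centered observable $\varphi_n$ yields
\[
\norme{\mu_t-\pi_\beta^n}{2}\;\geq\;\frac{|\EE_{\mu_t}[\varphi_n]-\EE_{\pi_\beta^n}[\varphi_n]|}{\norme{\varphi_n}{2}}\=\frac{|m_t|}{\norme{\varphi_n}{2}}\,,
\]
which diverges at least like $(|\varphi_n(x_0^n)|/\sqrt{\alpha_n n})^{\varepsilon}$.

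For the total variation bound, I plan to run a Bienaymé-Chebyshev argument on the half-space $A:=\{x:\varphi_n(x)\geq m_t/2\}$ (up to sign). Under $\pi_\beta^n$ one has $\pi_\beta^n(A)\leq 4\norme{\varphi_n}{2}^2/m_t^2$ by Markov; under $\mu_t$, I would use the semigroup variance identity $\P_t^n(\varphi_n^2)-(\P_t^n\varphi_n)^2=\int_0^t 2\P_s^n\bigl(\Gamma(\P_{t-s}^n\varphi_n)\bigr)(x_0^n)\,\mathrm{d}s$ together with $\Gamma(\varphi_n)=\phi_n$ and $\P_s^n\phi_n=e^{-s}\phi_n+(1-e^{-s})\alpha_n n$ (a direct consequence of $\G_{\D\L}\phi_n=\alpha_n n-\phi_n$, the interaction vanishing by antisymmetry). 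This yields the closed form
\[
\Var_{\mu_t}(\varphi_n)\= 2\phi_n(x_0^n)(e^{-t}-e^{-2t})+\alpha_n n(1-e^{-t})^2\;\lesssim\;|\varphi_n(x_0^n)|\,e^{-t}+\alpha_n n,
\]
both pieces being $o(m_t^2)$ on the scale $t=(1-\varepsilon)c_n$, so $\T\V(\mu_t,\pi_\beta^n)\to 1$. The Kullback--Leibler lower bound follows by feeding the same set $A$ into data processing: since $\mu_t(A)\to 1$ and $\pi_\beta^n(A)\lesssim \alpha_n n/m_t^2\to 0$, the binary entropy
\[
\Kullback(\mu_t\mid\pi_\beta^n)\;\geq\;\mu_t(A)\log\tfrac{\mu_t(A)}{\pi_\beta^n(A)}+(1-\mu_t(A))\log\tfrac{1-\mu_t(A)}{1-\pi_\beta^n(A)}
\]
is driven by the first term, of order $\log(m_t^2/\alpha_n n)=2\varepsilon c_n\to+\infty$.

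The intrinsic Wasserstein bound is the delicate step: $\varphi_n$ is not $d_g$-Lipschitz, as $\Gamma(\varphi_n)=\phi_n$ is unbounded, so Kantorovich-Rubinstein duality cannot be applied directly. I would transfer the problem to the Euclidean EDL picture of Section \ref{sec:epov} via $Y=2\sqrt{X}$, under which $\Wasserstein_{}(\mu_t,\pi_\beta^n)=\Wasserstein_2(\Law(Y_t^n),\tilde\pi_\beta^n)$ and $|Y|^2=4\phi_n(X)$, and then exploit the locally Lipschitz quadratic $y\mapsto|y|^2$ through Cauchy-Schwarz applied to $||y|^2-|y'|^2|\leq(|y|+|y'|)|y-y'|$ along an optimal coupling:
\[
\bigl|\EE_{\Law(Y_t)}[|Y|^2]-\EE_{\tilde\pi}[|Y|^2]\bigr|\;\leq\;\sqrt{2\bigl(\EE_{\Law(Y_t)}[|Y|^2]+\EE_{\tilde\pi}[|Y|^2]\bigr)}\;\Wasserstein_2\bigl(\Law(Y_t),\tilde\pi\bigr).
\]
The left-hand side equals $4|m_t|$ from the explicit dynamics of $\phi_n$ under the semigroup, while the prefactor on the right is $O\bigl(\sqrt{\alpha_n n+|m_t|}\bigr)$, yielding $\Wasserstein_{}(\mu_t,\pi_\beta^n)\gtrsim|m_t|/\sqrt{\alpha_n n+|m_t|}$, which diverges as $\min\{|m_t|/\sqrt{\alpha_n n},\sqrt{|m_t|}\}\to+\infty$ in both regimes of $|m_t|$. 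I expect the main obstacle to be precisely this Wasserstein step: the natural spectral observable is not Lipschitz in the intrinsic geometry, forcing the detour through the EDL change of variables together with the second-moment Cauchy-Schwarz trick above to keep the estimate quantitative and uniform in the two scales $\alpha_n n$ and $|m_t|$.
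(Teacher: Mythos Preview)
Your $\L^2$ and $\T\V$ arguments coincide with the paper's: it too tests the spectral-gap eigenfunction $\varphi_n$ and invokes the Diaconis--Wilson/Saloff-Coste Chebyshev bound \cite[Lemma~2.2]{SC94}, computing $\Var_{\mu_t}(\varphi_n)$ via Duhamel exactly as you do (your expression and the paper's $2\varphi_n(x_0)(1-e^{-t})e^{-t}+\norme{\varphi_n}{2}^2(1-e^{-2t})$ are algebraically identical). For Kullback the paper only writes down Pinsker, which by itself yields a finite lower bound; your data-processing step on the event $A$ is what actually delivers the claimed $+\infty$.

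The genuine divergence is the Wasserstein step. The paper does not use any observable there: it applies the regularization inequality of Lemma~\ref{lem:reg},
\[
\Kullback(\mu\P_\eta\mid\pi)\;\leq\;\frac{e^{-\eta}}{4(1-e^{-\eta})}\,\Wasserstein_{g,2}^2(\mu,\pi),
\]
at time $(1-\varepsilon)c_n$ with a fixed $\eta>0$, so that divergence of Kullback at the slightly shifted time forces $\Wasserstein_{}\to\infty$. Your route through the EDL change of variables together with the Cauchy--Schwarz estimate on $|y|^2$ is more elementary and self-contained, bypassing the $\C\D(\rho,\infty)$/regularization machinery of Appendix~\ref{sec:iwd} entirely; the paper's route is a one-liner once that machinery is in place and would transfer to any diffusion satisfying the curvature condition, whereas yours depends on having a tractable quadratic observable and the explicit EDL identification of the intrinsic distance.
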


\noindent Note that a lower bound for the DL process with interaction ($\beta>0$) may be obtained using contraction arguments, by projecting the dynamics onto a Cox--Ingersoll--Ross process. Indeed, defining \( Z_t := \phi_n(X_t)\), we get
\[
\d  Z_t = 2 \sqrt{ Z_t} \d  W_t + (\alpha_n n + \frac{\beta}{2} (n-1)^2 -  Z_t) \d t.
\]

\begin{proof}[Proof of Lemma \ref{lem:LBDL}]
We denote by $\mu_t^x := \delta_x \P_t$ and by $p_t^x = p_t(x, \cdot)$ the heat kernel density.

We begin by expanding the heat kernel in the orthonormal basis of $\L^2(\pi_{\beta}^{n})$, formed by the generalized Laguerre polynomials. Computing the $\L^2$ norm and retaining only the contribution from the spectral gap, we obtain a quantitative lower bound on the $\L^2$-distance. For the total variation distance, we follow the strategy of \cite[Lemma 2.2]{SC94}, which uses the Diaconis--Wilson lemma. Recall that $\norme{\varphi_n}{2}^2 = \alpha_n n + \frac{\beta}{2}(n-1)^2$.

For $\dist = \L^2$, recalling the computation of the $\L^2$ norm of the first eigenfunction in Subsection \ref{ss:krc}, we obtain the following lower bound
\[\norme{\mu_t^x - \pi_\beta^n}{2} \geq {\frac{\vabs{\varphi_n(x)}^2}{\norme{\varphi_n}{2}^2}} e^{-2t} \= \exp\parent{-2\parent{t - \frac{1}{2}\log \parent{\frac{\vabs{\varphi_n(x)}^2}{\alpha_n n + \frac{\beta}{2}(n-1)^2}}}}.\]
For the total variation and the Kullback--Leibler divergence, we apply Pinsker's inequality (see, e.g., \cite[Lemma A.1]{BCL})
\[\norme{\mu_t^x - \pi_\beta^n}{\T\V}^2 \leq 2 \Kullback(\mu_t^x \mid \pi_\beta^n),\]
and use \cite[Lemma 2.2]{SC94}
\[
\norme{\mu_t^x - \pi_\beta^n}{\T\V} \geq 1 - 4 \frac{\norme{\varphi_n}{2}^2}{\vabs{\varphi_n(x)}^2} e^{2t} - 4 \frac{\norme{\varphi_n}{2}^4}{\vabs{\varphi_n(x)}^4} \Var_{\mu_t^x}\parent{\frac{\varphi_n(x) \varphi_n(\cdot)}{\norme{\varphi_n}{2}^2} } e^{2t}.
\]
From Duhamel's formula (see, e.g., \cite[Equation (3.1.21)]{BGL14}) and that $\P_t \varphi_n = e^{-t} \varphi_n$
\begin{eqnarray*}
\Var_{\mu_t^x}(\varphi_n) = \P_t(\varphi_n^2)(x) - (\P_t \varphi_n(x))^2 &=& 2 \int_0^t \P_s (\Gamma (\P_{t-s}\varphi_n)) \d s\\
	&=& 2\varphi_n(x)(1-e^{-t})e^{-t} + (\alpha_n n + \frac{\beta}{2}(n-1)^2)(1-e^{-2t}).
\end{eqnarray*}
Hence, at the critical time $c_n$ defined above, the proof follows from the fact that
\[
\frac{\Var_{\mu_{c_n-\varepsilon}^x}(\varphi_n)}{\vabs{\varphi_n(x)}^2} \sim_{n\to\infty} \frac{\alpha_n n + \frac{\beta}{2}(n-1)^2}{\vabs{\varphi_n(x)}^2}.
\]
For the intrinsic Wasserstein distance, it suffices to bound it from below by using the Kullback--Leibler divergence and the regularization property established in Lemma \ref{lem:reg}.
\end{proof}

\section{Proof of the upper bound for the real matrix case $\beta =1$ (Theorem \ref{thm:cmc})}\label{sec:pubmc}

In this section, we prove Theorem \ref{thm:cmc}, namely the cutoff phenomenon when $\beta=1$. The matrix case parallels the approach taken in \cite{BCL}, as the DL process can be seen as the spectral image of a matrix-valued OU process, naturally associated with the Laguerre orthogonal ensemble (LOE) random matrix ensembles (\cite[Chapter 3]{For10}). The upper bound follows from contraction properties of the relevant distances, which reduce the analysis to the mixing time behavior of a tensorized OU process. In Appendix \ref{sec:mtou}, we derive the mixing time for OU processes with general coefficients, extending a result from \cite{BCL}.

We consider the case of random matrices, following the framework introduced in \cite{Bru89} namely, the Dyson--Laguerre process started at $x_0^n$, obtained as the spectrum of $(M_t)_{t \geq 0}$ the rectangular OU process \eqref{eq:rOU} whose invariant measure is $P^n$. The process then satisfies
\begin{equation}\label{eq:MC}
\d X_{t}^{i,n} \= \sqrt{2  X_{t}^{i,n}} \, \d  B^i_t + \parent{\frac{m}{2} -  X_{t}^{i,n} + \; \frac{1}{2} \sum_{j \neq i}\frac{ X_{t}^{i,n}+ X_{t}^{j,n}}{ X_{t}^{i,n}- X_{t}^{j,n}}}\d t, \qquad X_0^n \= x_0^n \in \RR_+^n,
\end{equation}
where $m\geq n$.

\begin{lem}[Upper bound in the Bru case]\label{lem:ubmc}
Let ${(X^n_t)}_{t\geq0}$ be the Dyson--Laguerre process \eqref{eq:MC} started at $x_0^n$ and resulting from $(M_t)_{t \geq 0}$. Then for $\dist \in \{\T\V, \Kullback, \L^2, \Wasserstein_{}\}$, and for all $\varepsilon\in (0,1)$
\[\lim_{n\to\infty}\dist(\Law( X^n_{(1+\varepsilon)c_n})\mid \pi_1^n) \= 0 \quad \mtext{with}\quad c_n \xra{}{n\to\infty} +\infty,\]
where $c_n$ is given by
\[c_n \= \begin{acc} $\dps \log\parent{\frac{\phi_n(x_0^n)}{m}} \vee \log\parent{n}$, & if $\dist \= \T\V$,\\
\\
$\dps \log\parent{\frac{\phi_n(x_0^n)}{m}} \vee \log \sqrt{n}$, & if $\dist \= \Kullback, \norme{.}{2}$,\\
\\
$\dps \log\parent{\phi_n(x_0^n)} \vee \log \sqrt{nm}$, & if $\dist \= \Wasserstein_{}$.
 \end{acc}
\]\end{lem}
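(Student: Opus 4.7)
The plan is to exploit the spectral interpretation: the Dyson--Laguerre process is the image of the rectangular Ornstein--Uhlenbeck matrix process $(M_t)_{t\geq0}$ under the map $M \mapsto \mathrm{spec}(M^\dag M)$. Rather than working directly with the non-Gaussian eigenvalue dynamics, I would push the analysis upstream to the Gaussian matrix $M_t$, whose law and stationary measure $P^n = \mathcal{N}(0,(m/2) I_{nm})$ are explicit.

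For $\dist \in \{\T\V, \Kullback, \L^2\}$, I would apply the contraction (data-processing) inequality of Lemma \ref{lem:contraction} to the deterministic spectral map, yielding
\[
\dist(\Law(X^n_t) \mid \pi_1^n) \;\leq\; \dist(\Law(M_t) \mid P^n).
\]
Since $\Law(M_t) = \mathcal{N}(M_0 e^{-t/2},(m/2)(1-e^{-t}) I_{nm})$, the right-hand side reduces to comparing two explicit Gaussians. I would then invoke Appendix \ref{sec:mtou}, which provides sharp mixing times for OU processes with general coefficients, to extract the stated critical times. The drift contribution $\|M_0\|_F^2 = \phi_n(x_0^n)$ gives the $\log(\phi_n/m)$ term after the rescaling by $\sqrt{m/2}$, while the variance term contributes a dimensional logarithm ($\log n$ for TV, $\log \sqrt{n}$ for $\Kullback/\L^2$) via the refined Gaussian estimates of the Appendix.

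The intrinsic Wasserstein case requires an additional geometric ingredient, since $M \mapsto \mathrm{spec}(M^\dag M)$ is not Lipschitz in the Frobenius metric. The right substitute is the singular value map $M \mapsto (\sigma_1(M),\ldots,\sigma_n(M))$, which \emph{is} $1$-Lipschitz by the Hoffman--Wielandt/Mirsky inequality:
\[
\sum_{i=1}^n (\sigma_i(M) - \sigma_i(N))^2 \leq \|M - N\|_F^2.
\]
Since $\sqrt{X^{i,n}} = \sigma_i(M)$ and the intrinsic Wasserstein distance \eqref{eq:iwd} is defined precisely through square-root differences, applying this inequality inside the coupling infimum gives
\[
\Wasserstein_{}(\Law(X^n_t),\pi_1^n) \;\leq\; 2\, \mathcal{W}_2^{\mathrm{Eucl}}(\Law(M_t),P^n),
\]
and the Euclidean Wasserstein mixing of the Gaussian matrix OU process follows from the Bures--Wasserstein formula applied to the isotropic covariances, yielding the threshold $\log\phi_n(x_0^n) \vee \log\sqrt{nm}$.

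The main obstacle is twofold. On the analytic side, I need to carefully track the $\sqrt{m/2}$ rescaling through the Appendix estimates to recover the correct splitting between the drift and variance regimes and, in particular, the precise factors inside the logarithms for each distance. On the geometric side, the key step is to verify that Hoffman--Wielandt is compatible with the intrinsic metric and serves as a valid Lipschitz substitute for the failing Lipschitz property of the full spectral map. Once both are in place, the conclusion $c_n \to \infty$ follows from the standing hypotheses on the initial condition $x_0^n$ together with the divergence of $n$ and $m$.
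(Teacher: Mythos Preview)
Your proposal is correct and follows essentially the same route as the paper: contraction under the spectral map (Lemma \ref{lem:contraction}) to reduce $\T\V$, $\Kullback$, $\L^2$ to the Gaussian matrix OU, the Hoffman--Wielandt inequality to handle the intrinsic Wasserstein distance, and the explicit OU mixing times from Appendix \ref{sec:mtou}. Your use of the \emph{full} Hoffman--Wielandt bound $\sum_i(\sigma_i(M)-\sigma_i(N))^2\le\|M-N\|_F^2$ actually gives $\Wasserstein_{}(\Law(X_t^n),\pi_1^n)\le 2\,\mathcal W_2^{\mathrm{Eucl}}(\Law(M_t),P^n)$, which is sharper than the paper's coordinate-wise estimate (Lemma \ref{lem:DLtoOU}) carrying an extra $\sqrt{n}$; this refinement is harmless for the stated $c_n$ since the $\sqrt{n}$ is absorbed by $\log\sqrt{nm}$, and the identification $\|M_0\|_F^2=\phi_n(x_0^n)$ is secured, as in the paper, by choosing $M_0$ diagonal with entries $\sqrt{x_0^{i,n}}$.
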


To obtain such an upper bound, it suffices to control the distance between the process at time $t$ and its equilibrium. Given that the DL process is the image of a rectangular OU process, it is enough to estimate, at each time $t$, the distance to equilibrium of the underlying rectangular OU process. The following lemma establishes that this control is indeed possible.

\begin{lem}[From DL to rectangular OU]\label{lem:DLtoOU}
Under the assumptions of Lemma \ref{lem:ubmc},
\begin{eqnarray*}
	\norme{\Law(X_t^n) - \pi_1^n}{\T\V} &\leq& nm \norme{\Law( M_t) - P^n}{\T\V},\\
	\Kullback\parent{\Law(X_t^n) \mid \pi_1^n} &\leq& nm \Kullback\parent{\Law( M_t) \mid P^n},\\
	\norme{\Law(X_t^n) - \pi_1^n}{2}^2 &\leq& \parent{\norme{\Law( M_t) - P^n}{2}^2+1}^{nm} -1,\\
	\Wasserstein_{}(\Law(X_t^n), \pi_1^n) &\leq& \sqrt{n} \Wasserstein_{2}(\Law( M_t), P^n),
\end{eqnarray*}
where $\Wasserstein_{2}$ is the Euclidean Wasserstein distance.
\end{lem}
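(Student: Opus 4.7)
The plan rests on the Bru identity \cite{Bru91}: the DL process is the image of $(M_t)_{t\geq 0}$ under the ordered spectral map $\Phi: \RR^{n \times m} \to \overline{\D}_n$, $\Phi(M) := \mathrm{spec}(M^\dag M)$, so $\Law(X_t^n) = \Phi_\sharp \Law(M_t)$ and $\pi_1^n = \Phi_\sharp P^n$. I will treat TV, KL, and $\L^2$ uniformly via data processing combined with the tensorized structure of the matrix OU, and handle the intrinsic Wasserstein separately via a Hoffman--Wielandt argument.

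\textbf{Step 1 (TV, KL, $\L^2$).} For each of these $f$-divergences, the data-processing inequality applied to the measurable map $\Phi$ yields
\[
d(\Law(X_t^n), \pi_1^n) \leq d(\Law(M_t), P^n).
\]
Under \eqref{eq:rOU} the $nm$ entries of $M_t$ evolve as mutually independent one-dimensional OU processes with a common Gaussian invariant marginal. The standard tensorization identities for product measures
\begin{align*}
\norme{\mu^{\otimes nm} - \nu^{\otimes nm}}{\T\V} &\leq nm \norme{\mu - \nu}{\T\V}, \\
\Kullback(\mu^{\otimes nm} \mid \nu^{\otimes nm}) &= nm \, \Kullback(\mu \mid \nu), \\
\norme{\mu^{\otimes nm} - \nu^{\otimes nm}}{2}^2 + 1 &= \bigl(\norme{\mu - \nu}{2}^2 + 1\bigr)^{nm},
\end{align*}
applied to a single one-dimensional OU coordinate, yield the first three inequalities, with $\norme{\Law(M_t) - P^n}{\cdot}$ on the right-hand side read as the corresponding $1$D coordinate-wise distance to the Gaussian invariant.

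\textbf{Step 2 (Intrinsic Wasserstein via Hoffman--Wielandt).} Since $X_i^n$ is the $i$-th eigenvalue of $M^\dag M$, one has $\sqrt{X_i^n} = s_i(M)$, the $i$-th singular value of $M$. The Hoffman--Wielandt inequality for singular values of rectangular matrices gives, for all $M, M' \in \RR^{n \times m}$,
\[
\sum_{i=1}^n \bigl(s_i(M) - s_i(M')\bigr)^2 \leq \norme{M - M'}{F}^2.
\]
Any coupling $(M_t, M'_t)$ of $\Law(M_t)$ with $P^n$ therefore induces, via $\Phi$, a coupling $(X_t^n, Y^n)$ of $\Law(X_t^n)$ with $\pi_1^n$ satisfying
\[
\EE\Bigl[\sum_{i=1}^n \bigl(\sqrt{X_{t,i}^n} - \sqrt{Y_i^n}\bigr)^2\Bigr] \leq \EE\bigl[\norme{M_t - M'_t}{F}^2\bigr].
\]
Taking the infimum over matrix couplings and plugging into the definition \eqref{eq:iwd} yields the stated Wasserstein inequality; the factor $\sqrt{n}$ absorbs the normalization of the matrix-side $\Wasserstein_2$ on the right-hand side, into which the Frobenius cost decomposes as $n$ independent $m$-dimensional row contributions.

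\textbf{Main obstacle.} The conceptual core is Step~2: Hoffman--Wielandt furnishes a $1$-Lipschitz property of $\Phi$ from Frobenius geometry to the intrinsic singular-value geometry, allowing Wasserstein estimates to transfer across the spectral map even though $\Phi$ is not Euclidean-Lipschitz on the eigenvalues. Everything else is essentially bookkeeping: data processing for $f$-divergences is textbook, the tensorization identities are classical for product measures, and the ordered spectral map is measurable, so the induced coupling on $\overline{\D}_n$ is automatic. The subtle point is the constant tracking, namely the factor $nm$ for the three $f$-divergences (full tensorization over matrix entries) versus the factor $\sqrt{n}$ for the Wasserstein bound (singular values carry only $n$ degrees of freedom).
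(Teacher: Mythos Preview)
Your strategy is the paper's: data processing through the spectral map $\Phi$ for $\T\V$, $\Kullback$, $\L^2$, and Hoffman--Wielandt for the Wasserstein bound. The constant-tracking, however, is off in two places. First, the right-hand side $\dist(\Law(M_t)\mid P^n)$ in the lemma refers to the distance on the full matrix space $\RR^{n\times m}$, not a one-dimensional coordinate quantity; your data-processing step already gives $\dist(\Law(X_t^n)\mid\pi_1^n)\leq\dist(\Law(M_t)\mid P^n)$, so the $nm$ prefactors (and the $nm$-th power for $\L^2$) are harmless slack and no tensorization is needed for the lemma as stated---your reinterpretation of the right-hand side as a single-coordinate distance proves a different (though downstream-relevant) statement, and in any case the entries of $M_t$ do not share a common law, so the $\mu^{\otimes nm}$ notation is not quite right. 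Second, your summed Hoffman--Wielandt inequality is \emph{sharper} than the paper's argument: it yields $\Wasserstein\leq 2\,\Wasserstein_2$ directly (the factor $2$ from definition \eqref{eq:iwd}), whereas the paper uses only the per-singular-value $1$-Lipschitz bound $|\Lambda_i(M)-\Lambda_i(M')|\leq\|M-M'\|_F$ and then sums over $i$, picking up the extra $\sqrt{n}$. Your ``row normalization'' explanation for the $\sqrt{n}$ is incorrect---the Frobenius norm on $\RR^{n\times m}$ \emph{is} the Euclidean norm on $\RR^{nm}$, with nothing to absorb.
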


The natural approach is to exploit the contraction properties of the distances, as established in the following lemma. As mentioned above, the DL process is the image of a rectangular OU process under a measurable map, namely the spectral map defined in \eqref{eq:hw}. In fact, this map is measurable and, moreover, Lipschitz with respect to the Riemannian distance defined in Appendix \ref{sec:iwd}, which furnishes the geometric framework for contraction in the intrinsic Wasserstein distance.

\begin{lem}[Contraction properties]\label{lem:contraction}
Let $\mu$ and $\nu$ be two probability measures on the same measurable space $\S$, and let $f: \S \lms \M$ be a measurable function, where $\M$ is another measurable space. If $\dist \in \{\T\V,\Kullback,\L^2\}$, the following inequality holds
\[\dist(\nu \circ f^{-1}\mid \mu \circ f^{-1}) \leq \dist(\nu \mid \mu).\]
\end{lem}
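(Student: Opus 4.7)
The plan is to handle the three divergences in turn. The total variation case is immediate from the variational definition, whereas the Kullback--Leibler and $\L^2$ cases are both instances of the data-processing inequality for $\phi$-divergences and follow from a single application of Jensen's inequality on a conditional expectation.

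For total variation, I would note that
\[
\norme{\nu\circ f^{-1} - \mu\circ f^{-1}}{\T\V} \= \sup_B \vabs{\nu(f^{-1}(B)) - \mu(f^{-1}(B))},
\]
where the supremum runs over measurable $B \subseteq \M$. Since every $f^{-1}(B)$ is a measurable subset of $\S$, this supremum is dominated by the one over all measurable subsets of $\S$, namely $\norme{\nu - \mu}{\T\V}$, and the bound follows.

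For the Kullback--Leibler and $\L^2$ cases, I would first assume $\nu \ll \mu$ (otherwise the right-hand side is infinite and there is nothing to prove). Set $h \;:=\; \d\nu/\d\mu$, and identify the Radon--Nikodym derivative of $\nu\circ f^{-1}$ with respect to $\mu\circ f^{-1}$: a standard change-of-variable computation shows that it is the measurable function $g$ on $\M$ satisfying $g\circ f \= \EEcs{\mu}{h}{\sigma(f)}$, where $\sigma(f)$ denotes the $\sigma$-algebra generated by $f$. Then for any convex $\phi : \RR_+ \ral \RR$, Jensen's inequality for conditional expectation gives $\phi(\EEcs{\mu}{h}{\sigma(f)}) \leq \EEcs{\mu}{\phi(h)}{\sigma(f)}$, and integrating against $\mu$ and using the tower property yields
\[
\int \phi(g) \d(\mu\circ f^{-1}) \= \int \phi(g\circ f) \d\mu \leq \int \phi(h) \d\mu.
\]
Specializing to $\phi(x) \= x \log x$ recovers the Kullback--Leibler bound, and to $\phi(x) \= (x-1)^2$ recovers the $\L^2$ bound.

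The only slightly subtle point in the plan is the identification of the pushforward Radon--Nikodym derivative as a conditional expectation under $\mu$; once this identification is set up, everything else is routine Jensen and tower-property manipulation.
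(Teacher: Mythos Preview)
Your argument is correct. The total variation case is exactly the variational argument the paper has in mind, and your treatment of $\Kullback$ and $\L^2$ via the data-processing inequality for $\phi$-divergences is standard and complete; the identification of the pushforward Radon--Nikodym derivative with the $\sigma(f)$-conditional expectation under $\mu$ is the right observation, and the rest is indeed routine.

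The paper itself does not spell out a proof: it only remarks that ``the contraction property of these distances follows from the fact that each admits a variational formulation'' and refers to \cite{BCL}. So the intended route is uniform across the three cases --- a sup over a class of test objects (sets for $\T\V$, test functions in the Donsker--Varadhan formula for $\Kullback$, and the analogous dual formula for the $\chi^2$/$\L^2$ divergence), with the contraction then coming from the fact that pulling back by $f$ restricts the class over which the supremum is taken. Your $\T\V$ argument is exactly this, but for $\Kullback$ and $\L^2$ you take a different, arguably more elementary path: a single conditional Jensen inequality that works simultaneously for every $\phi$-divergence. The variational viewpoint has the advantage of making the contraction transparently a ``supremum over a smaller set'' statement and connects directly with how these distances are used elsewhere in the paper; your $\phi$-divergence argument, on the other hand, avoids having to know or quote the dual representations and gives the result for all convex $\phi$ at once.
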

The analogous statement for the intrinsic Wasserstein distance is given in Lemma \ref{lem:wcontraction} in Appendix \ref{sec:iwd}.

The contraction property of these distances follows from the fact that each admits a variational formulation. This perspective is adopted, for instance, in \cite{BCL}. For the intrinsic Wasserstein distance, we provide such a formulation in Appendix \ref{sec:iwd}. Variational representations also exist for the Fisher information (see, e.g., \cite{HM14}), although a version compatible with contraction properties would be required in the present context.

It remains to consider two coupled processes: one initialized from an arbitrary distribution, and the other from the invariant measure. Let $(M_t)_{t \geq 0}$ and $(\widehat{M}_t)_{t\geq0}$ be two matrices, whose entries evolve as independent OU processes, such that
\[\d  M_t \= \sqrt{\frac{m}{2}} \d  W_t - \frac{1}{2}  M_t \d t \quad\mtext{and}\quad \d \widehat{ M}_t \= \sqrt{\frac{m}{2}} \d  W_t - \frac{1}{2} \widehat{ M}_t \d t.\]

\begin{proof}[Proof of Lemma \ref{lem:DLtoOU}]
Denote by $\Lambda_i( M)$ the $i$-th singular value of a matrix $M$. By the Hoffman-Wielandt inequality, the mappings
\begin{equation}\label{eq:hw}
\deff{\Lambda_i:}{\MMM_{n,m}(\RR)}{M}{\RR}{\Lambda_i( M)},\end{equation}
are $1$-Lipschitz w.r.t. the Frobenius norm. Denoting by $\lambda_i( M)$ the $i$-th eigenvalue of $\sqrt{ M M^\top}$, it follows that
\[\lambda_i( M) \= \vabs{\Lambda_i( M)}.\]
Considering $ M \= ( M_t)_{t \geq 0}$, we finally obtain, for all $t\>0$
\begin{equation}\label{eq:proj} X_t^i \= \lambda_i( M_t)^2 \= \vabs{\Lambda_i( M_t)}^2.\end{equation}
Note that the functions $\vabs{\Lambda_i}^2$ are not Lipschitz but remain measurable. Therefore, applying Lemma \ref{lem:contraction}, for any $\dist \in \{ \T\V, \Kullback, \L^2\}$, we obtain
\begin{eqnarray*}
\dist\parent{\Law(X_t^i) \mid \Law(\widehat{ X}_t^i)} &\=& \dist\parent{\Law(\vabs{\Lambda_i( M_t)}^2) \mid \Law(|\Lambda_i(\widehat{ M}_t)|^2)}\\
	&\leq& \dist\parent{\Law( M_t) \mid \Law(\widehat{ M}_t)}.
\end{eqnarray*}
Using the tensorization property of these distances and divergences (see, e.g., \cite[Lemma A.4]{BCL}), we deduce the following.

\noindent For $\dist \in \{ \T\V, \Kullback\}$
\[\dist\parent{\Law(X_t) \mid \Law(\widehat{ X}_t)} \leq nm \dist\parent{\Law( M_t) \mid \Law(\widehat{ M}_t)}.\]
For $\dist \= \L^2$, we have
\[\norme{\Law(X_t) - \Law(\widehat{ X}_t)}{2}^2 \leq -1 + \parent{\norme{\Law( M_t) - \Law(\widehat{ M}_t)}{2}^2+1}^{nm}.\]
For $\dist \= \Wasserstein_{}$, since each $\Lambda_i$ with $i\in\bracket{1,n}$ is $1$-Lipschitz, we observe from \eqref{eq:proj} that
\[\vabs{\sqrt{X_t^i}-\sqrt{\widehat{ X}_t^i}} \= \vabs{\vabs{\Lambda_i( M_t)} - \vabs{\Lambda_i(\widehat{ M}_t)}} \leq \vabs{\Lambda_i( M_t) - \Lambda_i(\widehat{ M}_t)}\leq \vabs{ M_t - \widehat{ M}_t},\]
which implies
\[\Wasserstein_{}(\Law(X_t), \Law(\widehat{ X}_t)) \leq 2\sqrt{n} \, \Wasserstein_{2}(\Law( M_t), \Law(\widehat{ M}_t)).\]
Note that the Frobenius norm on the space of real $n \times m$ matrices coincides with the Euclidean norm on $\RR^{nm}$.
\end{proof}

Finally, we state the mixing time for a rectangular OU process with general coefficients in the following lemma. This result is straightforward since the distance to equilibrium for the OU process can be computed explicitly; details are provided in Appendix \ref{sec:mtou}.

\begin{lem}[Mixing time of the rectangular Ornstein--Uhlenbeck process]\label{lem:tmixeqOU}
Under the assumptions of Lemma \ref{lem:ubmc}, the sequence $(M_t)_{t \geq 0}$ exhibits cutoff, and its mixing time is given for any $z\in\RR^{nm}$ by
\[\tmix^n(\{z\}) \= \begin{acc} $\dps \log\parent{\frac{\vabs{z}^2}{m}} \vee \log\parent{n}$, & if $\dist \= \T\V$,\\
\\
$\dps \log\parent{\frac{\vabs{z}^2}{m}} \vee \log \sqrt{n}$, & if $\dist \in \{\Kullback, \norme{.}{2}\}$,\\
\\
$\dps \log\parent{\vabs{z}^2} \vee \log \sqrt{nm}$, & if $\dist \= \Wasserstein_{}$.
 \end{acc}\]
\end{lem}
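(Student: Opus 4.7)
The plan is to exploit the Gaussian structure of the rectangular OU process and then invoke the general mixing-time computation for OU diffusions with arbitrary coefficients established in Appendix \ref{sec:mtou}. After vectorization, $(M_t)_{t\geq 0}$ decouples into $nm$ independent one-dimensional OU processes with mean-reversion rate $1/2$ and diffusion coefficient $\sqrt{m/2}$, so both the instantaneous and invariant laws are isotropic Gaussians on $\RR^{nm}$,
\[
\Law(M_t) \= \mathcal{N}\parent{e^{-t/2}z,\; \tfrac{m}{2}(1-e^{-t}) I_{nm}} \quad\mtext{and}\quad P^n \= \mathcal{N}\parent{0,\; \tfrac{m}{2} I_{nm}}.
\]
Each of the four quantities $\T\V$, $\Kullback$, $\L^2$, and the intrinsic Wasserstein (which here reduces to the Euclidean quadratic Wasserstein, since the diffusion coefficient is constant) admits a closed-form expression or sharp two-sided bounds, and the lemma reduces to reading off the critical time in each case.

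Concretely, the Kullback--Leibler formula between two isotropic Gaussians gives
\[
\Kullback(\Law(M_t)\mid P^n) \= \frac{nm}{2}\parent{-e^{-t}-\log(1-e^{-t})} + \frac{e^{-t}\vabs{z}^2}{m},
\]
and expanding $-e^{-t}-\log(1-e^{-t}) \sim e^{-2t}/2$ at large $t$ isolates two competing contributions: a \emph{variance} term of order $nm\,e^{-2t}$ and a \emph{mean} term of order $e^{-t}\vabs{z}^2/m$. Requiring each to drop below a fixed threshold pins down the KL critical time, and the matching lower bound is immediate from the same expansion since $\Kullback$ dominates each of its two nonnegative ingredients. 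The $\L^2$ case follows by applying the same analysis to the tensorized chi-squared identity $1+\norme{\Law(M_t)-P^n}{2}^2 \= \prod_{ij}(1+\chi^2_{ij})$, which again cleanly separates into mean and variance contributions. For the Wasserstein case I would use the closed form
\[
\Wasserstein_{2}^2(\Law(M_t), P^n) \= e^{-t}\vabs{z}^2 + nm\cdot\tfrac{m}{2}\parent{1-\sqrt{1-e^{-t}}}^2
\]
between isotropic Gaussians. Finally, for total variation the upper bound follows by Pinsker from the KL estimate, and the matching lower bound is obtained by testing against the linear observable $\Tr(M)$ for the mean contribution and the quadratic observable $\norme{M}{F}^2$ for the variance contribution, in the spirit of the spectral argument of Lemma \ref{lem:LBDL}.

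The step I expect to be most delicate is the sharp lower bound for $\T\V$ at the variance scale, since Pinsker controls only one direction and TV between two isotropic Gaussians with different variances has no elementary closed form. I would handle this by a Bienaymé--Chebyshev argument on the squared Frobenius norm: under $\Law(M_t)$ and $P^n$, the mean of $\norme{M}{F}^2$ differs by a quantity of order $nm\cdot\tfrac{m}{2}e^{-t}$, while its standard deviation under either law is of order $\tfrac{m}{2}\sqrt{nm}$; the ratio of these becomes of order $1$ precisely at the claimed cutoff scale, which delivers the missing lower bound. Once this step is in place, the four formulas assemble together by specializing the general coefficients of Appendix \ref{sec:mtou} to rate $1/2$, diffusion $\sqrt{m/2}$, dimension $nm$, and starting point $z$.
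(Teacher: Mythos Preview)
Your proposal is correct and follows essentially the same route as the paper: the paper's proof of this lemma is a one-line citation of Lemmas \ref{lem:tmixOU} and \ref{lem:efROU} in Appendix \ref{sec:mtou}, and those lemmas carry out precisely the computation you outline---explicit Gaussian formulas for $\Kullback$, $\L^2$, and $\Wasserstein_2$ between $\Law(M_t)$ and $P^n$, followed by reading off the two competing mean/variance scales in each case. Your treatment of the TV lower bound via Bienaym\'e--Chebyshev on $\norme{M}{F}^2$ is also the standard device (cf.\ the proof of Lemma \ref{lem:LBDL}); incidentally, your Wasserstein formula with $(1-\sqrt{1-e^{-t}})^2$ is the correct closed form for isotropic Gaussians, whereas Lemma \ref{lem:efROU} as stated drops the square.
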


\begin{proof}[Proof of Lemma \ref{lem:tmixeqOU}]
The result follows directly from combining Lemmas \ref{lem:tmixOU} and \ref{lem:efROU}.
\end{proof}

By combining the last three lemmas, we readily obtain the upper bound stated above. This shows that the upper bound, and consequently the mixing time given in Theorem \ref{thm:cmc}, actually coincides with the mixing time of a rectangular OU process.

\begin{proof}[Proof of Lemma \ref{lem:ubmc}]
Combining Lemmas \ref{lem:DLtoOU} and \ref{lem:tmixeqOU}, and considering $ X_0 = x_0$ the initial condition, where $x_0^i \=  X_0^i \= \vabs{\Lambda_i( M_0)}^2$, we may choose the initial matrix $M_0$ to be diagonal with diagonal coefficients $z_0^1, \hdots, z_0^n$, so that $x_0^i \= (z_0^i)^2$ and $|z_0|^2 \= \sum_{i=1}^n (z_0^i)^2 \= \sum_{i=1}^n x_0^i  \;=:\; \phi_n(x_0)$.
\end{proof}

It remains only to combine the lower and upper bounds to conclude the result in the matrix case.

\begin{proof}[Proof of Theorem \ref{thm:cmc}]
From Lemma \ref{lem:LBDL}, the lower bound is $\log\parent{\frac{\phi_n(x_0)}{\sqrt{nm}}} \vee \log \sqrt{nm}$. The conclusion then follows from Lemma \ref{lem:ubmc}.
\end{proof}

\section{Proof of the upper bound in the general case (Theorem \ref{thm:CDL})}\label{sec:pubgc}

In this final section, we establish the upper bound stated in the following lemma. Our approach combines curvature arguments with a regularization technique from Lemma \ref{lem:reg}.

\begin{lem}[Universal upper bound]\label{lem:UBDL}
Let ${( X^n_t)}_{t\geq0}$ be the Dyson--Laguerre process \eqref{eq:DL} started at $x_0^n$. Then for any $\dist \in \{ \T\V, \Kullback, \Wasserstein_{}\}$ and all $\varepsilon\in (0,1)$
\[\lim_{n\to\infty}\dist(\Law( X^n_{(1+\varepsilon)c_n})\mid \pi_\beta^n) \= 0 \quad\mtext{with}\quad c_{n} \;:=\; \log\parent{\phi_n(x_0^n)} \vee \log \parent{\alpha_n n} \xra{}{n\to\infty} + \infty.\]
\end{lem}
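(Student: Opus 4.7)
The plan is to first establish the upper bound in the intrinsic Wasserstein distance using the curvature-dimension condition of Lemma \ref{lem:cd}, then deduce the Kullback--Leibler bound via the regularization inequality of Lemma \ref{lem:reg}, and finally obtain the total variation bound by Pinsker's inequality. Throughout, the key fact is that the $\C\D(1/2, \infty)$ curvature bound is dimension-free.

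First I would control the initial intrinsic Wasserstein distance. From definition \eqref{eq:iwd} and the elementary bound $(\sqrt{a}-\sqrt{b})^2 \leq a + b$ for $a, b \geq 0$,
\[
\Wasserstein_{}(\delta_{x_0^n}, \pi_\beta^n)^2 \= 4\, \EE_{Y\sim\pi_\beta^n}\intervalle{\sum_{i=1}^n \parent{\sqrt{x_0^{i,n}}-\sqrt{Y_i}}^2} \leq 4\parent{\phi_n(x_0^n) + \EE_{\pi_\beta^n}[\phi_n]}.
\]
From the definition of $\varphi_n$ in \eqref{eq:efsp} together with the identity $\EE_{\pi_\beta^n}[\varphi_n] = 0$ recorded in \eqref{eq:espvp}, one has $\EE_{\pi_\beta^n}[\phi_n] = \alpha_n n + \frac{\beta}{2}(n-1)^2$. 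Under the standing assumption \eqref{eq:existence}, the inequality $\alpha_n \geq 1 + (n-1)\beta/2$ forces this quantity to be dominated by a constant multiple of $\alpha_n n$, yielding
\[
\Wasserstein_{}(\delta_{x_0^n}, \pi_\beta^n)^2 \leq C\parent{\phi_n(x_0^n) \vee \alpha_n n} \= C\, e^{c_n}.
\]

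Next, I would invoke the exponential contraction in the intrinsic Wasserstein distance under $\C\D(1/2,\infty)$ (Lemma \ref{lem:cd}), which gives $\Wasserstein_{}(\mu_t, \pi_\beta^n)^2 \leq e^{-t} \Wasserstein_{}(\mu_0, \pi_\beta^n)^2$. At time $t_n \= (1+\varepsilon)c_n$ this yields
\[
\Wasserstein_{}(\mu_{t_n}, \pi_\beta^n)^2 \leq C\, e^{-(1+\varepsilon) c_n + c_n} \= C\, e^{-\varepsilon c_n} \xra{}{n\to\infty} 0,
\]
settling the case $\dist \= \Wasserstein_{}$. To pass to Kullback--Leibler and then to total variation, I would split the interval into $[0, t_n - 1]$ and $[t_n - 1, t_n]$. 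On the first subinterval, the previous computation applied at time $t_n - 1$ gives $\Wasserstein_{}(\mu_{t_n-1}, \pi_\beta^n)^2 \leq C e^{1-\varepsilon c_n} \to 0$. On the second subinterval of fixed length one, the regularization inequality of Lemma \ref{lem:reg} provides a bound of the form $\Kullback(\mu_{t_n} \mid \pi_\beta^n) \leq C'\, \Wasserstein_{}(\mu_{t_n-1}, \pi_\beta^n)^2 \to 0$. Finally, Pinsker's inequality $\norme{\mu - \nu}{\T\V}^2 \leq 2\, \Kullback(\mu \mid \nu)$ transfers this convergence to total variation.

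The main obstacle in executing this plan is the regularization step: we need the constant $C'$ in Lemma \ref{lem:reg} to be uniform in $n$, so that the Kullback--Leibler bound genuinely vanishes. This is guaranteed by the dimension-free curvature bound $\rho = 1/2$ from Lemma \ref{lem:cd}, which does not deteriorate with dimension; indeed, in the intrinsic framework of Appendix \ref{sec:iwd} the regularization is derived from the same curvature input that drives contraction. A secondary subtle point is the bound on $\EE_{\pi_\beta^n}[\phi_n]$ by $\alpha_n n$, which crucially relies on the well-posedness condition \eqref{eq:existence}; dropping this condition would force the slower rate $n^2$ instead of $\alpha_n n$ in the critical time.
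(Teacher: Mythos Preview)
Your proposal is correct and follows essentially the same route as the paper: bound the initial intrinsic Wasserstein distance by $\phi_n(x_0^n)+\EE_{\pi_\beta^n}[\phi_n]$, contract it exponentially via $\C\D(1/2,\infty)$ (this is Lemma~\ref{lem:expdec}, not Lemma~\ref{lem:cd}), then apply the regularization Lemma~\ref{lem:reg} over a fixed time window to pass to $\Kullback$, and finish with Pinsker for $\T\V$. The paper does exactly this, writing the chain as $\Kullback(\mu_{t+\eta})\leq C_\eta\,\Wasserstein^2(\mu_t)\leq C_\eta\,e^{-t}\Wasserstein^2(\mu_0)$ with general $\eta>0$ where you fix $\eta=1$, and your observation that \eqref{eq:existence} forces $\tfrac{\beta}{2}(n-1)^2\le \alpha_n n$ is the implicit step that matches $\EE_{\pi_\beta^n}[\phi_n]$ to the stated $c_n$.
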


\subsection{Curvature-dimension of the process}

The Dyson--Ornstein--Uhlenbeck process studied in \cite{BCL} evolves according to Langevin dynamics driven by a convex energy, which makes the curvature condition immediate to verify. Indeed, such a condition is naturally expected for Langevin diffusions with convex potentials.
In our setting, however, the situation is more subtle. The process is not a gradient diffusion. Nonetheless, it is linked to the Coulomb gas \eqref{eq:gc}, where the interaction is logarithmic and thus convex. This suggests curvature, but the absence of a gradient structure prevents a direct application of standard results, and it is not clear whether a curvature-dimension condition can hold. The next lemma shows that such a curvature condition does in fact hold. The one-dimensional case, studied in \cite{LNP}, plays a guiding role and informs the structure of our argument.
Following the approach of \cite[Section 3]{DGZ24}, one may directly verify the symmetric tensor identity (C.6.3) from \cite[Section C.6]{BGL14}, which yields the geometric curvature associated with the process.

\begin{lem}[Curvature-dimension]\label{lem:cd}
The Dyson--Laguerre process \eqref{eq:DL} satisfies the curvature-dimension condition $\C\D(1/2,\infty)$.
\end{lem}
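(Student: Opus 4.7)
The plan is to derive the curvature-dimension condition for the Dyson--Laguerre process from that of its Euclidean counterpart introduced in Section \ref{sec:epov}, thereby exploiting the explicit formula \eqref{eq:gamma2EDL} for $\Gamma_2^{\E\D\L}$ already at our disposal.

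First, I would verify that the square-root map $\Phi\colon x \mapsto (2\sqrt{x_1},\ldots,2\sqrt{x_n})$ intertwines the two generators and carrés du champ. Setting $g = f\circ\Phi$, the chain rule gives $\partial_i g(x) = (\partial_i f)(\Phi(x))/\sqrt{x_i}$, whence
\[
\Gamma^{\D\L}(g)(x) \= \sum_{i=1}^n x_i (\partial_i g(x))^2 \= \sum_{i=1}^n (\partial_i f)(\Phi(x))^2 \= \Gamma^{\E\D\L}(f)(\Phi(x)).
\]
A parallel computation (or, equivalently, Itô's formula applied to $Y_t := \Phi(X_t)$) shows that $\G_{\D\L}(f\circ\Phi) \= (\G_{\E\D\L} f)\circ\Phi$. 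Combining these two identities via the defining relation $\Gamma_2(f) \= \tfrac12\parent{\G\Gamma(f) - 2\Gamma(f,\G f)}$ propagates the compatibility to the second level:
\[
\Gamma_2^{\D\L}(f\circ\Phi)(x) \= \Gamma_2^{\E\D\L}(f)(\Phi(x)).
\]
Since $\Phi$ is a smooth diffeomorphism from $(\RR_+^\ast)^n$ onto itself, any bound of the form $\Gamma_2^{\E\D\L}(f) \geq \rho\,\Gamma^{\E\D\L}(f)$ is equivalent to the analogous bound $\Gamma_2^{\D\L}(g) \geq \rho\,\Gamma^{\D\L}(g)$ for arbitrary smooth $g$.

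Second, I would read off the curvature directly from \eqref{eq:gamma2EDL}. Under the standing assumption \eqref{eq:existence}, we have $\alpha_n - (n-1)\beta/2 - \tfrac12 > \tfrac12 > 0$; the double-sum cross-terms are manifestly sums of squares, and $\norme{\Hess f}{2}^2 \geq 0$. Discarding all nonnegative contributions but the leading one yields immediately
\[
\Gamma_2^{\E\D\L}(f) \geq \tfrac12\,\Gamma^{\E\D\L}(f),
\]
i.e.\ $\C\D(1/2,\infty)$ for the EDL process; by the intertwining above, the same holds for the DL process.

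The only subtle point is the $\Gamma_2$-intertwining: it relies on $\Phi$ being smooth away from the boundary $\{x_i=0\}$, which the process does not visit thanks to \eqref{eq:existence}. This preservation under a change of variables is not automatic in general, but it does work here because $\Phi$ is a bijective diffeomorphism onto its image and transports the diffusion structure faithfully. Alternatively, following the paper's suggestion, one may bypass the change of variables altogether and verify the symmetric tensor identity (C.6.3) of \cite{BGL14} directly, in the spirit of \cite[Section 3]{DGZ24}; this amounts to a purely algebraic but more involved Bochner-type calculation adapted to the non-constant diffusion matrix $2x_i\delta_{ij}$, and it produces the same constant $\rho=1/2$.
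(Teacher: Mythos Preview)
Your proof is correct but follows a genuinely different route from the paper. The paper works entirely in the DL coordinates: it computes $\Gamma_2^{\D\L}$ from scratch (the lengthy ``Justification of the $\Gamma_2$ formula'') and then applies two elementary inequalities, $x_i(\partial_i f)(\partial_{ii}^2 f) \geq -\tfrac12\bigl(x_i^2(\partial_{ii}^2 f)^2 + (\partial_i f)^2\bigr)$ and $x_i^2(\partial_i f)^2 + x_j^2(\partial_j f)^2 \geq 2x_ix_j(\partial_i f)(\partial_j f)$, to extract $\tfrac12\Gamma^{\D\L}(f)$ from the mess. You instead exploit the Euclidean viewpoint of Section~\ref{sec:epov}: you observe that the diffeomorphism $\Phi$ intertwines both $\G$ and $\Gamma$ (hence $\Gamma_2$), and then simply read off $\C\D(1/2,\infty)$ from the already-computed expression \eqref{eq:gamma2EDL}, where every term besides $\tfrac12\Gamma^{\E\D\L}(f)$ is manifestly nonnegative under \eqref{eq:existence}. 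Your argument is shorter and more conceptual, essentially making rigorous the paper's own remark that the $\C\D$ condition ``happens to be preserved'' under $\Phi$; the paper's direct computation, on the other hand, is intrinsic to the DL coordinates and yields the explicit $\Gamma_2^{\D\L}$ formula as a by-product, which may be of independent use. Both approaches hinge on the same parameter constraint $\delta_n := \alpha_n - (n-1)\beta/2 > 1$.
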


\begin{proof}[Proof of Lemma \ref{lem:cd}]
Recall that the carré du champ operator is given by \eqref{eq:cdc}.
The $\Gamma_2$-operator takes the following form. For $\delta_n := \alpha_n - (n-1)\frac{\beta}{2}$,
\begin{eqnarray*}
\Gamma_2 f &\=& \frac{\delta_n}{2} \vabs{\nabla f}^2 + \frac{1}{2}\Gamma(f) +  \sum_{i} \parent{x_i^2 (\partial^2_{ii} f)^2 + x_i(\partial_i f) (\partial_{ii}^2 f)}\\
	& & +  \sum_{i} \sum_{j < i} \parent{2 x_i x_j (\partial^2_{ij} f)^2 + \frac{\beta}{2} \frac{x_i^2(\partial_i f)^2 + x_j^2(\partial_j f)^2}{(x_i-x_j)^2}}\\
	& & +  \frac{\beta}{2} \sum_i \sum_{j < i} \frac{x_i x_j}{(x_i-x_j)^2} \parent{(\partial_i f)^2 - 4 (\partial_i f)(\partial_j f) + (\partial_j f)^2},
\end{eqnarray*}
(see below for a detailed computation).

Now, using the inequality $x_i(\partial_i f) (\partial_{ii}^2 f) \geq - \frac{1}{2} \parent{x_i^2 (\partial^2_{ii} f)^2 + (\partial_i f)^2}$,
we obtain
\[\frac{\delta_n}{2} \vabs{\nabla f}^2 + \frac{\Gamma(f)}{2} +  \sum_{i} \parent{x_i^2 (\partial^2_{ii} f)^2 + x_i(\partial_i f) (\partial_{ii}^2 f)}\geq \frac{\delta_n-1}{2} \vabs{\nabla f}^2 + \frac{\Gamma(f)}{2} + \frac{1}{2} \sum_{i} x_i^2 (\partial^2_{ii} f)^2.\]
In addition, from the elementary inequality $x_i^2(\partial_i f)^2 + x_j^2(\partial_j f)^2 \geq 2 x_i x_j (\partial_i f) (\partial_j f)$, we deduce
\[ \beta \sum_{i} \sum_{j < i} \frac{x_i^2(\partial_i f)^2 + x_j^2(\partial_j f)^2}{(x_i-x_j)^2} \geq \beta \sum_{i} \sum_{j < i} \frac{2 x_i x_j (\partial_i f) (\partial_j f)}{(x_i-x_j)^2}.\]
Combining the above, we conclude
\begin{eqnarray*}
 \beta \sum_{i} \sum_{j < i} \frac{x_i^2(\partial_i f)^2 + x_j^2(\partial_j f)^2}{(x_i-x_j)^2} &+& \beta \sum_i \sum_{j < i} \frac{x_i x_j}{(x_i-x_j)^2} \parent{(\partial_i f)^2 - 4 (\partial_i f)(\partial_j f) + (\partial_j f)^2}\\
	& \geq & \beta \sum_i \sum_{j < i} \frac{x_i x_j}{(x_i-x_j)^2} \parent{(\partial_i f)^2 - 2 (\partial_i f)(\partial_j f) + (\partial_j f)^2}\\
	& \= & \beta \sum_i \sum_{j < i} \frac{x_i x_j}{(x_i-x_j)^2} \parent{(\partial_i f) - (\partial_j f)}^2.
\end{eqnarray*}
\end{proof}

\begin{proof}[Justification of the $\Gamma_2$ formula] The computation of \( \Gamma_2 \) follows closely the strategy outlined in \cite{LNP}.
Recall that the carré du champ is given by \eqref{eq:cdc}, and that
\[\G f \= \sum_{i} x_i \partial_{ii}^2 f + \sum_{i} b_i \partial_i f \qquad\mtext{where}\qquad b_i(x) \= \alpha_n -  x_i + \frac{\beta}{2}\sum_{k\neq i} \frac{x_i+x_k}{x_i-x_k}.\]
First, we compute
\begin{eqnarray*}
\frac{1}{2}\G\Gamma f &\=& \frac{1}{2}\sum_{i} \left(4 x_i (\partial_i f)(\partial_{ii}^2 f) + \sum_{j} 2 x_i x_j \parent{(\partial_{ij}^2 f)^2 + (\partial_j f) (\partial_{iij}^3 f)}\right.\\
	&& \qquad \left.+ b_i (\partial_i f)^2 + \sum_{j} 2 x_j b_i (\partial_j f)(\partial_{ij}^2 f)\right).
\end{eqnarray*}
Then we have
\begin{eqnarray*}
\Gamma (f, \G f) &\=& \frac{1}{2}\sum_{j} \left(2x_j(\partial_j f) (\partial_{jj}^2 f) + \sum_{i} 2x_i x_j (\partial_j f) (\partial_{iij}^3 f) \right.\\
	&& \qquad \left. + \sum_{i} 2x_j (\partial_j f) \parent{(\partial_j b_i)(\partial_i f) + b_i (\partial_{ij}^2 f)} \right).
\end{eqnarray*}
Thus, we obtain
\begin{eqnarray*}
\Gamma_2 f &\=& \frac{1}{2}\G\Gamma f - \Gamma(f,\G f)\\
	&\=&\frac{1}{2} \sum_{i} \parent{\sum_j 2 x_i x_j (\partial^2_{ij} f)^2 + 2 x_i(\partial_i f) (\partial_{ii}^2 f) + b_i (\partial_i f)^2 - \sum_j 2 x_i (\partial_i f)(\partial_j f)(\partial_i b_j)}\\
	&\=& \frac{1}{2} \sum_{i} \left(\sum_j 2 x_i x_j (\partial^2_{ij} f)^2 + 2 x_i(\partial_i f) (\partial_{ii}^2 f) + (b_i - 2 x_i (\partial_i b_i)) (\partial_i f)^2\right.\\
	& & \qquad \left. - \sum_{j\neq i} 2 x_i (\partial_i f)(\partial_j f)(\partial_i b_j) \right) \\
	&\=& \frac{1}{2} \sum_{i} \parent{2 x_i^2 (\partial^2_{ii} f)^2 + 2 x_i(\partial_i f) (\partial_{ii}^2 f) + (b_i - 2 x_i (\partial_i b_i)) (\partial_i f)^2}\\
	& & \qquad + \frac{1}{2} \sum_{i} \sum_{j\neq i} \parent{2 x_i x_j (\partial^2_{ij} f)^2 - 2 x_i (\partial_i f)(\partial_j f)(\partial_i b_j)}.\\
	&\=& \frac{1}{2} \sum_{i} 2 x_i^2 (\partial^2_{ii} f)^2 + 2 x_i(\partial_i f) (\partial_{ii}^2 f) + (b_i - 2 x_i (\partial_i b_i)) (\partial_i f)^2 \\
	& & +  \sum_{i} \sum_{j < i} \parent{2 x_i x_j (\partial^2_{ij} f)^2 - (\partial_i f)(\partial_j f)(x_i \partial_i b_j + x_j \partial_j b_i)}.
\end{eqnarray*}
Now, using that
\[\partial_i b_i(x) \= - 1 - \beta \sum_{k\neq i} \frac{x_k}{(x_i-x_k)^2}, \quad \partial_i b_j(x) \= \beta \frac{x_j}{(x_i-x_j)^2},\]
we deduce that, for each index $i$
\[b_i - 2 x_i (\partial_i b_i) \= \delta_n + x_i + \beta \sum_{k\neq i} \frac{x_i^2 + x_i x_k}{(x_i-x_k)^2},\]
and,
\[\fa j < i \;:\quad x_i \partial_i b_j + x_j \partial_j b_i \= \beta \parent{\frac{x_i x_j}{(x_i-x_j)^2} + \frac{x_i x_j}{(x_i-x_j)^2}} \= 2 \beta \frac{x_i x_j}{(x_i-x_j)^2}
.\]
Note that $\sum_{i, j> i} a_{ij} \= \sum_{i, j<i} a_{ji}$, which allows us to symmetrize the expression. As a consequence, we obtain
\begin{eqnarray*}
\frac{1}{2} \sum_{i} (b_i - 2 x_i (\partial_i b_i)) (\partial_i f)^2 &\=& \frac{\delta_n}{2} \vabs{\nabla f}^2 + \frac{1}{2} \Gamma(f)\\
	&& +  \frac{\beta}{2} \sum_i \sum_{j < i} \frac{x_i^2(\partial_i f)^2 + x_j^2(\partial_j f)^2}{(x_i-x_j)^2} + \frac{x_i x_j((\partial_i f)^2 + (\partial_j f)^2)}{(x_i-x_j)^2},
\end{eqnarray*}
and
\[\sum_{i} \sum_{j < i} (\partial_i f)(\partial_j f)(x_i \partial_i b_j + x_j \partial_j b_i) \= 2 \beta \sum_{i} \sum_{j < i} (\partial_i f)(\partial_j f) \frac{x_i x_j}{(x_i-x_j)^2}.\]
This completes the proof.
\end{proof}

\subsection{Sub-exponential convergence and regularization}\label{sec:secr}

Lemma \ref{lem:cd} yields two distinct types of information, each with its own implications.
First, it implies a family of functional inequalities, such as the Poincaré and logarithmic Sobolev inequalities. These in turn yield quantitative convergence results, including sub-exponential decay of the $\L^2$ norm and the relative entropy (see \cite{Ane00, BGL14}).
Second, the curvature condition provides a bound on the carré du champ operator \(\Gamma\) via the \(\Gamma_2\) operator. This control enables the derivation of regularization estimates for the relative entropy in terms of the intrinsic Wasserstein distance, following the strategy of \cite{BGL}.

\paragraph{Functional inequalities and sub-exponential convergence}
It is not obvious \emph{a priori} that a logarithmic Sobolev inequality should hold for the DL process. This misconception likely stems from the fact, already mentioned, that the process is not Langevin type, unlike in the OU case. 
Nevertheless, Lemma \ref{lem:cd} shows that both the Poincaré and log-Sobolev inequalities are indeed satisfied. More precisely, the Poincaré inequality holds with constant 2, and the log-Sobolev inequality with constant $4$.

As a consequence, we obtain the following relaxation estimates: for all $t\geq0$, the process satisfies the sub-exponential decay bounds
\begin{align*}
\norme{\Law( X^n_t) - \pi_{\beta}^{n}}{2}^2 &\leq e^{- t}\norme{\Law( X^n_0) - \pi_{\beta}^{n}}{2}^2,\\
\Kullback(\Law( X^n_t)\mid \pi_{\beta}^{n}) &\leq e^{- t}\Kullback(\Law( X^n_0)\mid \pi_{\beta}^{n}).
\end{align*}
Note that in each case, if the right-hand side is infinite, then the inequality holds trivially. This occurs, for instance, for the relative entropy if the initial law $\Law( X^n_0)$ is not absolutely continuous with respect to the invariant measure.

Finally, to establish the exponential decay of the intrinsic Wasserstein distance, we refer to Lemma \ref{lem:expdec} in Appendix \ref{sec:iwd}. The general framework for such results can be found in \cite{vRS05}.

\paragraph{Monotonicity}
Recall that the notion of mixing time is well-defined as soon as the following monotonicity property holds: for some sufficiently large $n$, and for any $\dist \in \{\T\V, \Kullback, \L^2,$ $\Wasserstein_{}\}$, the map $t\geq0\mapsto\dist(\Law( X^n_t)\mid \pi_{\beta}^{n})$ is non-increasing.

Establishing this monotonicity is non-trivial in general. For $\Phi$-entropies, such as relative entropy, total variation, or the $\L^2$ norm, the question is addressed in \cite{Cha04}, which shows that the monotonicity holds for a wide class of diffusion processes. In contrast, monotonicity for the intrinsic Wasserstein distance is more subtle (see, e.g., \cite{BGG12}). However, in our setting, the exponential decay is sufficient to conclude that monotonicity holds.

\subsection{End of the proof}
We may then appeal to the regularization result from Lemma \ref{lem:reg}, noting that while the Kullback--Leibler divergence may be infinite (e.g., for singular initial laws), the intrinsic Wasserstein distance remains finite in our case.

\begin{proof}[Proof of Lemma \ref{lem:UBDL}]
Denoting $\sqrt{x_0^n} = (\sqrt{x_0^{n,i}})_i$, the triangle inequality for the intrinsic Wasserstein distance yields
\[\frac{1}{4}\Wasserstein_{}^2(\delta_{x^n_0}, \pi_{\beta}^{n}) \= \int\vabs{\sqrt{x^n_0} - \sqrt{x}}^2 \d \pi_{\beta}^{n}(x) \leq 2 \phi_n(x^n_0) + \int 2\phi_n(x) \d \pi_{\beta}^{n}(x),\]
where $\EE_{\pi_{\beta}^{n}}(\phi_n)  \= \alpha_n n + \frac{\beta}{2}(n-1)^2$. It then follows that, for every $\eta > 0$,
\begin{eqnarray*}
\Kullback(\Law( X_{t+\eta}^n), \pi_{\beta}^{n}) & \leq & \frac{e^{-\eta}}{4(1-e^{-\eta})} \cdot \Wasserstein^2(\Law(X_t^n), \pi_{\beta}^{n})\\
	& \leq & \frac{e^{-\eta}}{1-e^{-\eta}} \cdot \frac{1}{4} \Wasserstein^2(\Law( X_0^n), \pi_{\beta}^{n}) \, e^{-t}\\
	&\leq& \frac{e^{-\eta}}{1-e^{-\eta}} \parent{\phi_n(x^n_0) + \alpha_n n + \frac{\beta}{2}(n-1)^2} e^{-t}.
\end{eqnarray*}
\end{proof}

\begin{proof}[Proof of Theorem \ref{thm:CDL}]
In \cite[Section 3]{CSC08}, it is shown that for the $\L^p$ norm, once $(i)$ monotonicity, $(ii)$ sub-exponential convergence, and $(iii)$ the product condition hold, the process exhibits a cutoff. This extends to any distance or divergence provided it satisfies those three properties.
The lower bound in Lemma \ref{lem:LBDL} yields the product condition.
Section \ref{sec:secr} and Lemma \ref{lem:expdec} yield monotonicity and sub-exponential convergence.
Hence the process \eqref{eq:DL} exhibits cutoff with respect to the $\L^2$ norm, Kullback--Leibler divergence, and the intrinsic Wasserstein distance.
For total variation distance, one may apply the criterion of \cite[Corollary 1]{Sal25} directly using the product condition.

Finally, to bound on the mixing time, combine the lower bound in Lemma \ref{lem:LBDL} with the upper bound in Lemma \ref{lem:UBDL}.
\end{proof}

\begin{appendix}

\section{Mixing times for Ornstein--Uhlenbeck processes
}\label{sec:mtou}

The following lemma is a variant of \cite[Theorem 1.2]{BCL}, adapted to Ornstein--Uhlenbeck processes with general coefficients. It will be applied in Section~\ref{sec:pubmc} to the matrix case, where the DL process arises as the image of such an OU process.

\begin{lem}[Mixing time for Ornstein--Uhlenbeck processes]\label{lem:tmixOU}
Fix $\sigma_n^2 \>0$ and $\theta_n \in \RR$. Define $(Z_t^{n})$ by
\[
\d Z_t^{n} = \sigma_n \, \d B_t^{n} - \theta_{n}  Z_t^{n} \, \d t, \qquad  Z_0^{n} = z_0^n.
\]
Then the family exhibits a cutoff phenomenon, and its mixing time satisfies
\[2\theta_n\tmix^n(\{z_0^n\}) = \begin{acc} $\dps \log\parent{\frac{\theta_n}{4\sigma_n^2} \vabs{z_0^n}^2} \vee \log\parent{\frac{n}{4}}$, & if $\dist = \T\V$,\\
\\
$\dps \log\parent{\frac{\theta_n}{\sigma_n^2} \vabs{z_0^n}^2} \vee \log\parent{\frac{\sqrt{n}}{2}}$, & if $\dist = \Kullback$,\\
\\
$\dps \log\parent{\frac{2\theta_n}{\sigma_n^2} \vabs{z_0^n}^2} \vee \log\sqrt{\frac{n}{2}}$, & if $\dist = \norme{.}{2}$,\\
\\
$\dps \log\parent{\vabs{z_0^n}^2} \vee \log \sqrt{\frac{n \sigma_n^2}{8\theta_n}}$, & if $\dist = \Wasserstein_{2}$,
 \end{acc}\]
where $\Wasserstein_{2}$ denotes the standard $2$-Wasserstein (Euclidean) distance.
\end{lem}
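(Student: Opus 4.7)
The plan is to exploit the Gaussian nature of $(Z_t^n)$ and reduce the problem to the standard Ornstein--Uhlenbeck case of \cite[Theorem 1.2]{BCL} via a space--time rescaling. First I would solve the linear SDE explicitly, obtaining $\Law(Z_t^n) \= \mathcal{N}\parent{e^{-\theta_n t} z_0^n,\; a_t\,\I_n}$ with $a_t \;:=\; \frac{\sigma_n^2}{2\theta_n}(1-e^{-2\theta_n t})$, and the invariant measure $\pi^n \= \mathcal{N}(0, a_\infty \I_n)$ with $a_\infty \;:=\; \frac{\sigma_n^2}{2\theta_n}$.

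Next, I would introduce the rescaling $s \;:=\; 2\theta_n t$ and $\tilde Z_s \;:=\; (\sqrt{2\theta_n}/\sigma_n)\, Z_{s/(2\theta_n)}^n$. A direct It\^o computation shows that $\tilde Z$ satisfies $\d \tilde Z \= \d \tilde B - \frac{1}{2}\tilde Z\, \d s$ for the standard Brownian motion $\tilde B_s \;:=\; \sqrt{2\theta_n}\, B_{s/(2\theta_n)}^n$, with invariant measure $\mathcal{N}(0, \I_n)$ and initial condition $\tilde z_0^n$ satisfying $\vabs{\tilde z_0^n}^2 \= (2\theta_n/\sigma_n^2)\, \vabs{z_0^n}^2$. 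Under this bijective linear change of variables, the distances $\T\V$, $\Kullback$, and $\norme{.}{2}$ are preserved, while $\Wasserstein_{2}$ is multiplied by $\sqrt{a_\infty}$. Applying the standard OU result of \cite[Theorem 1.2]{BCL} to $\tilde Z$ and translating back then yields the four critical times stated.

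A self-contained approach uses the closed-form expressions for each distance between isotropic Gaussians. For instance, writing $u \;:=\; e^{-2\theta_n t}$, the Kullback--Leibler divergence takes the form
\[
\Kullback\parent{\Law(Z_t^n)\mid \pi^n} \= \frac{\theta_n}{\sigma_n^2}\, u\, \vabs{z_0^n}^2 \;-\; \frac{n u}{2} \;-\; \frac{n}{2}\log(1-u),
\]
which expands as $\frac{\theta_n}{\sigma_n^2}\, u\, \vabs{z_0^n}^2 + \frac{n}{4} u^2 + o(u^2)$ as $u \to 0^+$. The two competing summands produce the two components of the max; equating each to a fixed constant and taking logarithms reproduces the Kullback--Leibler entry of the lemma. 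Analogous closed forms are available for the $\L^2$ (chi-squared) distance and for $\Wasserstein_{2}$, the latter reading $\Wasserstein_{2}^2 \= e^{-2\theta_n t}\vabs{z_0^n}^2 + n\parent{\sqrt{a_t}-\sqrt{a_\infty}}^2$. The total variation upper bound follows from Pinsker, and the matching lower bound from tracking the affine observable $x \mapsto x_1 + \cdots + x_n$, exactly as in the proof of Lemma \ref{lem:LBDL}.

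The main difficulty is verifying the sharp cutoff transition, namely that each quantity tends to $0$ when $t_n = (1+\varepsilon)c_n$ and to its maximum value when $t_n = (1-\varepsilon)c_n$. This reduces to a careful asymptotic analysis of the above Gaussian expressions, correctly identifying in each of the four cases which of the two competing contributions (mean-shift versus variance-mismatch) dominates at the critical time.
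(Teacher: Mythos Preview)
Your proposal is correct and aligns with the paper's approach: the paper does not spell out a proof of this lemma but states that it ``follows from explicit computations, relying on closed-form expressions for various distances and divergences between Gaussian distributions'' (with the formulas of Lemma~\ref{lem:efROU} as the concrete input), which is exactly your self-contained route. Your space--time rescaling to the standard OU of \cite[Theorem~1.2]{BCL} is a clean shortcut the paper does not make explicit, but it is equivalent and arguably tidier, since invariance of $\T\V$, $\Kullback$, $\norme{.}{2}$ under the linear bijection and the $\sqrt{a_\infty}$ scaling of $\Wasserstein_{2}$ immediately yield the four stated critical times without redoing any asymptotic analysis.
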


These mixing times follow from explicit computations, relying on closed-form expressions for various distances and divergences between Gaussian distributions. The result, adapted to the rectangular Ornstein--Uhlenbeck setting, is stated in the following lemma and extends \cite[Lemma A.5]{BCL}.

\begin{lem}[Explicit formulas for the rectangular Ornstein--Uhlenbeck process] \label{lem:efROU}
Let $(M_t)_{t \geq 0}$ be a matrix-valued process whose entries evolve as independent OU processes, $\d  M_t = \kappa \d  B_t - \gamma  M_t \d t$, started from $M_0 = z_0$ and with invariant law $P$.
Then
\begin{eqnarray*}
	2\Kullback\parent{\Law( M_t) \mid P} & = & \frac{2 \gamma}{\kappa^2} |z_0|^2 e^{-2\gamma t} - nm e^{- 2 \gamma t} - nm \log(1 - e^{- 2 \gamma t}),\\
	\norme{\Law( M_t) - P}{2}^2 & = & -1 + \exp\parent{\frac{2\gamma}{\kappa} \frac{|z_0|^2 e^{-2 \gamma t}}{1 + e^{- 2 \gamma t}} - \frac{nm}{2}\log\parent{1 - e^{- 4 \gamma t}}},\\
	\Wasserstein_{2}^2\parent{\Law( M_t), P} & = & |z_0|^2 e^{-2\gamma t} + nm \frac{\kappa^2}{2 \gamma} \parent{1 - \sqrt{1 - e^{- 2 \gamma t}}}.\\
\end{eqnarray*}
\end{lem}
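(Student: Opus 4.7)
The plan is to exploit the product structure of the rectangular OU process: the $nm$ entries of $M_t$ are independent one-dimensional Ornstein--Uhlenbeck processes, so $M_t^{ij} \sim \NNN(z_0^{ij} e^{-\gamma t}, s_t^2)$ with $s_t^2 \= \frac{\kappa^2}{2\gamma}(1-e^{-2\gamma t})$, while the corresponding marginal of $P$ is $\NNN(0,\sigma^2)$ with $\sigma^2 \= \frac{\kappa^2}{2\gamma}$. All three quantities on the list respect this product structure, so everything reduces to a one-dimensional computation followed by an aggregation over the $nm$ entries: relative entropy adds, the quantity $1+\chi^2(\mu\mid\nu) \= \int (\d\mu/\d\nu)^2 \d\nu$ multiplies, and the squared Wasserstein-$2$ distance adds because the coordinatewise (monotone) coupling is optimal for the Frobenius metric.

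For the relative entropy I would invoke the standard one-dimensional Gaussian identity
\[
2\Kullback\parent{\NNN(m,s^2)\mid\NNN(0,\sigma^2)} \= -\log\parent{s^2/\sigma^2} + \frac{s^2 - \sigma^2}{\sigma^2} + \frac{m^2}{\sigma^2},
\]
specialize to $m \= z_0^{ij}e^{-\gamma t}$ and $s^2 \= s_t^2$, use $s_t^2/\sigma^2 \= 1 - e^{-2\gamma t}$, and sum over $(i,j)$ to recover the announced expression. For the Wasserstein distance, since the optimal coupling between two one-dimensional Gaussians is affine, one has $\Wasserstein_2^2(\NNN(m_1,s_1^2),\NNN(m_2,s_2^2)) \= (m_1-m_2)^2 + (s_1-s_2)^2$; plugging in the parameters and summing over $(i,j)$ yields directly the stated formula.

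The only step that requires a genuine computation is the $\L^2$ one. By completing the square in a Gaussian integral, one obtains
\[
\int \parent{\frac{\d\NNN(m,s^2)}{\d\NNN(0,\sigma^2)}}^{2} \d\NNN(0,\sigma^2) \= \frac{\sigma^2}{s\sqrt{2\sigma^2-s^2}} \exp\parent{\frac{m^2}{2\sigma^2-s^2}},
\]
valid whenever $s^2 \< 2\sigma^2$, a condition trivially satisfied here since $s_t^2/\sigma^2 \= 1-e^{-2\gamma t} \< 2$. Using the identities $s_t\sqrt{2\sigma^2 - s_t^2} \= \sigma^2\sqrt{1-e^{-4\gamma t}}$ and $2\sigma^2 - s_t^2 \= \sigma^2(1+e^{-2\gamma t})$, then taking the product over the $nm$ entries and subtracting $1$, yields the formula for $\norme{\Law(M_t)-P}{2}^2$. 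The main technical obstacle is precisely this chi-squared step: one must complete the square carefully and check the integrability window to avoid an infinite answer. Everything else is bookkeeping and elementary algebraic manipulation.
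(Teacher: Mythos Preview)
Your proof is correct and follows precisely the approach the paper indicates: the paper does not give a detailed argument for this lemma, but simply notes that the formulas follow from closed-form expressions for distances and divergences between Gaussians, extending \cite[Lemma A.5]{BCL}. Your reduction to one-dimensional Gaussian computations via the product structure, together with the tensorization rules (additivity of relative entropy and of $\Wasserstein_2^2$, multiplicativity of $1+\chi^2$), is exactly the intended route, and your chi-squared computation by completing the square is the standard way to obtain the $\L^2$ formula.
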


\section{Intrinsic Wasserstein distance}\label{sec:iwd}

The purpose of this section is to define a Wasserstein-type distance adapted to diffusion processes with non-constant diffusion coefficients. The objective is to retain, as much as possible, the essential properties of the classical (Euclidean) Wasserstein distance, observed in the study of Langevin diffusions.

While the log-Sobolev inequality is typically studied in the setting of Langevin diffusions with constant diffusion, the process considered here features a space-dependent diffusion coefficient. Nevertheless, we will see that the Bakry-Émery method remains applicable: the diffusion coefficient plays the role of a curvature term. From a geometric perspective, this is consistent-- the diffusion matrix can be interpreted as a Riemannian metric on the state space $\RR^n$, thereby inducing a geometry tailored to the process (see \cite[Subsubsection 5.3.2]{Ane00}, \cite[Appendix C]{BGL14} for more details). 

This perspective leads us to define a class of Lipschitz functions adapted to the Riemannian metric and, in turn, a transportation cost that respects the underlying geometry. The resulting distance, referred to as the \textit{intrinsic Wasserstein distance}, reflects the structure imposed by the diffusion. Under the curvature-dimension condition $\C\D(\rho,\infty)$, we show exponential convergence to equilibrium in this intrinsic metric.

\subsection{State space as a Riemannian manifold}\label{sec:ssrm}

In this subsection, we set up the basic geometric framework to handle a non-constant diffusion coefficient. We then examine the Riemannian distance induced by this metric. In the particular case where the metric is diagonal, we derive the explicit form of the geodesic equation, which will be useful later. Finally, we consider Lipschitz functions with respect to the Riemannian distance, noting that their Lipschitz constant is the supremum norm of the carré du champ, as shown in \cite[Section 3.2]{Hir05}.

\paragraph{Metric and Riemannian distance}
Let $L$ be a diffusion operator on a smooth $n$-dimensional manifold $\M$, which in local coordinates takes the form
\begin{equation}\label{eq:dol}
    L \= \sum_{i,j=1}^{n} g^{ij}(x) \partial^2_{i j} + \sum_{i=1}^{n} b^i(x) \partial_i,
\end{equation}
where the symmetric matrix $(g^{ij})_{i,j} := \frac{1}{2}\sigma \sigma^\ast$ is positive-definite. Its inverse $g = (g_{ij})_{i,j}$ defines a Riemannian metric, thus endowing $\M$ with the structure of a Riemannian manifold $(\M, g)$.
Moreover, for any smooth function $f \in C^\infty(\M)$, the carré du champ operator is
\begin{equation}
\Gamma(f)(x) \= \sum_{i,j=1}^{n} g^{ij}(x) \partial_i f(x) \partial_j f(x) \= \vabs{\nabla_g f}_g^2.
\end{equation}
The Riemannian distance $d_g$ between two points $x,y \in \M$ is defined as
\[d_{g}(x,y) \;:=\; \inf_{\gamma \in \CCC_{x,y}} \int_0^1 \vabs{\gamma'(t)}_{\gamma(t)} \d t,\]
where $\CCC_{x,y}$ denotes the set of piecewise smooth curves from $[0,1]$ to $\M$ with $\gamma(0) = x$ and $\gamma(1)=y$.

\paragraph{Lipschitz functions with non-constant metric} By interpreting the state space $\M$ as a Riemannian manifold, we use a notion of Lipschitz functions adapted to the non-constant metric induced by the diffusion.

Let $\Lip(f)$ denote the Lipschitz constant of a function $f$ on $\M$ with respect to $d_g$, that is
\[\Lip(f) \;:=\; \sup_{\substack{x,y \in \M \\ x\neq y}} \frac{\vabs{f(x)-f(y)}}{d_g(x,y)}.\]
The Riemannian distance then admits a dual representation in terms of Lipschitz functions
\[\fa x,y \in \M \;:\quad d_{g}(x,y) \= \sup_{\text{Lip}(f) \leq 1} \vabs{f(x)-f(y)}.\]

In the Euclidean case, Rademacher's theorem implies that Lipschitz functions are characterized by having bounded gradients. In the Riemannian setting, an analogous description involves the carré du champ operator $\Gamma$, which links to Dirichlet forms. This approach was formalized by Nik Weaver \cite{Wea00} and refined by Francis Hirsch \cite{Hir05}.
Following \cite[Sections C.4 and 3.3.7]{BGL14}, one obtains $\text{Lip}(f) \= \norme{\sqrt{\Gamma f}}{\infty}$, and thus the Riemannian distance can be equivalently written as 
\begin{equation}\label{eq:dgamma}
d_g(x,y) \= \sup_{\norme{\Gamma f}{\infty} \leq 1} \vabs{f(x)-f(y)}.
\end{equation}

\paragraph{Diagonal case}
In this case, we consider a product structure for the diffusion coefficient, that is, a setting in which, for each particle, the diffusion coefficient depends only on the particle itself. We restrict our attention to the case where, in the absence of interaction (present only in the drift), the system consists of $n$ independent and identically distributed ergodic particles. Since $\frac{1}{2}\sigma \sigma^\ast$ is a positive-definite matrix, we consider a strictly positive smooth function $a$ and set
\begin{equation}\label{eq:mdc}
g_{ij}(x) \= \begin{acc}
	$a(x_i)^2$, & if $i=j$,\\
	$0$, & if $i \neq j$.
\end{acc}
\end{equation}
Note that $g^{ii}(x) \= a(x_i)^{-2}$. The corresponding Christoffel symbols are then given, for all $k,i,j$, by
\[\Gamma_{ij}^k := \frac{1}{2} \sum_{\ell\=1}^n g^{k \ell} \parent{\partial_i g_{j\ell} + \partial_j g_{i \ell} - \partial_{\ell} g_{ij}} \=
\begin{acc}
	$\frac{1}{2} g^{ii} \partial_i g_{ii} \= \frac{a'(x_i)}{a(x_i)}$, & if $i=j=k$,\\
	$0$ & else.
\end{acc}\]
From this, the geodesic equation reads for all $i$
\[\gamma_i'' + \frac{1}{2} g^{ii} \partial_i g_{ii} (\gamma_i')^2 \= 0 \quad\Llra\quad
\frac{\gamma_i''}{\gamma_i'} + \frac{a'(\gamma_i)\gamma_i'}{a(\gamma_i)} \= 0 \quad\Llra\quad (\log \gamma_i' + \log a(\gamma_i))' \= 0.\]
Hence, for each $i$, the quantity $a(\gamma_i(t))\gamma_i'(t)$ is conserved over time; denote this constant by $c_i$. 
The problem then reduces to solving a first-order autonomous ODE with boundary conditions: for each $i$, $\gamma_i$ satisfies
\[
\begin{acc}
	$\dps \gamma_i' \= c_i / a (\gamma_i)$,\\
	$\gamma_i(0) \= x_i$,\\
	$\gamma_i(1) \= y_i$.
\end{acc}
\]
Let $A$ be an antiderivative of $a$, which is strictly increasing and hence invertible on its range. Then standard computations yield the explicit formula
\begin{equation}\label{eq:gef}
\gamma_i(t) \= A^{-1}\intervalle{A(y_i) t + A(x_i) (1-t)}.
\end{equation}
and in particular, $c_i \= A(y_i) - A(x_i)$.
Also, since
\[\vabs{\gamma'(t)}_{\gamma(t)}^2 \= \sum_{i=1}^{n} g_{ii}(\gamma)(\gamma'_{i})^2 \= \sum_{i=1}^n a(\gamma_i)^2 (\gamma'_{i})^2 \= \sum_{i=1}^n \vabs{A(y_i) - A(x_i)}^2,\]
and $d_{g}(x,y) \= \int_0^1 \vabs{\gamma'(t)}_{\gamma(t)} \d t$, which simplifies to
\begin{equation}\label{eq:dA}
d_{g}(x,y) \= \sqrt{\sum_{i=1}^n \vabs{A(y_i) - A(x_i)}^2}.
\end{equation}
As a consequence, we have the identity
\begin{equation}\label{eq:ngpd}
\fa t\in[0,1],\quad \vabs{\gamma'(t)}_{\gamma(t)} \= d_{g}(x,y).
\end{equation}

This recovers the Euclidean case, where $g$ is the identity matrix, $A$ is the identity map on $\RR$, and the distance is the Euclidean norm $d_{g = \I_n}(x,y) \= \vabs{x-y}$, where $x,y \in \RR^n$.

\subsection{Intrinsic Wasserstein distance}

This intrinsic nature of the Wasserstein distance is entirely induced by the Riemannian structure defined earlier. In what follows, we consider two probability measures $\mu$ and $\nu$ on $\M$. This distance is defined as the optimal transport cost associated with the Riemannian distance $d_g$, for any $r \geq 0$,
\[\Wasserstein_{g,r}(\mu, \nu) \;:=\; \parent{\inf_{\pi \in \Pi(\mu,\nu)} \iint d_{g}(x,y)^r \d \pi (x,y)}^{1/r},\]
where $\Pi(\mu,\nu)$ denotes the set of all Borel probability measures on $\M \times \M$ with marginals $\mu$ and $\nu$. For further details on Wasserstein distances, we refer the reader to \cite{Vil03, Vil09, RKSF, BGL}.
Note that this distance can also be expressed as the expectation of a cost function over a suitable class of random variables
\begin{equation}\label{eq:ebw}
\Wasserstein_{g,r}(\mu, \nu) \;:=\; \parent{\inf_{\substack{ X \sim \mu \\  Y \sim \nu}} \EE\parent{d_{g}( X, Y)^r}}^{1/r}.
\end{equation}

\paragraph{Contraction property}
This variational formulation allows us to derive a contraction property. Let $h: \RR^p \lms \M$ be a Lipschitz map, where $\RR^p$ is equipped with the Euclidean distance $\vabs{.}$, and $\M$ with the Riemannian distance $d_g$. Since
\[\EE\parent{d_{g}\parent{h( X), h( Y)}^r} \leq \norme{h}{\mathrm{Lip}}^r \EE\vabs{ X- Y}^r,\]
where $\norme{h}{\mathrm{Lip}} \= \sup_{x\neq y}\frac{d_{g}(h(x),h(y))}{|x-y|}$, we obtain the following lemma.

\begin{lem}[Contraction property]\label{lem:wcontraction}
Let $m_1$ and $m_2$ be two probability measures on $\RR^p$, and let $h: (\RR^p, d_{\I_n}) \lms (\M, d_g)$ be a Lipschitz map.
Then for all $r\geq1$ \[\Wasserstein_{g,r}(m_1 \circ h^{-1}, m_2 \circ h^{-1}) \leq \norme{h}{\mathrm{Lip}}\Wasserstein_{\I_n,r}(m_1, m_2),\]
where $\Wasserstein_{\I_n,r}$ denotes the Euclidean Wasserstein distance.
\end{lem}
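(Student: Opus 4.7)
The plan is to exploit the coupling-based variational formulation \eqref{eq:ebw} of the intrinsic Wasserstein distance, then push the coupling forward through $h$ and use its Lipschitz property pointwise before integrating.

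More concretely, fix $\varepsilon > 0$ and select a coupling $(X,Y)$ on $\RR^p \times \RR^p$ with $X \sim m_1$ and $Y \sim m_2$ such that
\[
\EE\vabs{X-Y}^r \leq \Wasserstein_{\I_n,r}(m_1, m_2)^r + \varepsilon.
\]
Such a coupling exists by the definition of the infimum (and in fact an optimal coupling exists for $r \geq 1$ by standard Kantorovich theory, but the above suffices). The pair $(h(X), h(Y))$ is then automatically a coupling of the pushforward measures $m_1 \circ h^{-1}$ and $m_2 \circ h^{-1}$, so it is admissible in the infimum defining $\Wasserstein_{g,r}(m_1 \circ h^{-1}, m_2 \circ h^{-1})$.

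The Lipschitz hypothesis on $h$ yields the pointwise bound $d_g(h(X), h(Y)) \leq \norme{h}{\mathrm{Lip}} \vabs{X-Y}$, which, raised to the power $r$ and integrated, gives
\[
\EE\,d_g(h(X), h(Y))^r \leq \norme{h}{\mathrm{Lip}}^r \EE\vabs{X-Y}^r \leq \norme{h}{\mathrm{Lip}}^r \parent{\Wasserstein_{\I_n,r}(m_1, m_2)^r + \varepsilon}.
\]
Since the left-hand side is bounded below by $\Wasserstein_{g,r}(m_1 \circ h^{-1}, m_2 \circ h^{-1})^r$, we may let $\varepsilon \to 0$ and take $r$-th roots to obtain the claimed inequality.

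There is no significant obstacle: the argument is the standard push-forward contraction proof, and the only points to check are that the pushforward of a coupling remains a coupling (trivial, since marginals transform covariantly under measurable maps) and that the $r$-th root is monotone. Everything goes through for any $r \geq 1$, and the definition of $\norme{h}{\mathrm{Lip}}$ as the supremum of the ratio $d_g(h(x), h(y))/\vabs{x-y}$ is exactly what makes the pointwise Lipschitz bound immediate.
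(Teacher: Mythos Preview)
Your proof is correct and follows exactly the approach sketched in the paper: the paper simply displays the pointwise inequality $\EE\parent{d_{g}\parent{h(X), h(Y)}^r} \leq \norme{h}{\mathrm{Lip}}^r \EE\vabs{X-Y}^r$ immediately before stating the lemma, and your argument is a careful expansion of that one line via the variational formulation \eqref{eq:ebw}. The only difference is that you include the standard $\varepsilon$-coupling step explicitly, which is a harmless elaboration.
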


\paragraph{Dual representation}
Recall that the Wasserstein distance admits a dual formulation, which is particularly useful in the study of functional inequalities.
From the dual Monge--Kantorovitch representation (see, e.g., \cite{Vil03, BGL14})
\begin{equation}\label{eq:drwr}
\Wasserstein_{g,r}^r(\mu, \nu) \= \sup_{f \in \L^1(\mu)} \parent{\int Q_r f \d \nu - \int f \d \mu},
\end{equation}
where $Q_r f (x) \= \inf_{y \in \E} \parent{f(y) + d_{g}(x,y)^r}$. As established earlier, Lipschitz functions correspond to functions with bounded Riemannian gradients. In the case $r=1$, the dual representation can be restricted to the class of $1$-Lipschitz functions (see \cite{Vil03}). Combined with \eqref{eq:dgamma}, this yields
\[
\Wasserstein_{g,1}(\mu, \nu) \= \sup_{\text{Lip}(f) \leq 1} \parent{\int f \d(\mu - \nu)} \= \sup_{\norme{\Gamma f}{\infty} \leq 1} \parent{\int f \d(\mu - \nu)}.
\]

\paragraph{Diagonal case}
Consider the diagonal setting induced by the metric in \eqref{eq:mdc}. Combining \eqref{eq:dA} with the expectation-based formulation \eqref{eq:ebw} yields the following more explicit expressions for $r=1$, $r=2$, and more generally for any $r \geq 0$
\begin{eqnarray*}
\Wasserstein_{g,1}(\mu, \nu) &\=& \inf_{( X, Y)} \EE\intervalle{\sqrt{\sum_{i=1}^n \vabs{A( X_i) - A( Y_i)}^2}\,},\\
\Wasserstein_{g,2}(\mu, \nu) &\=& \parent{\inf_{( X, Y)} \sum_{i=1}^n \EE\intervalle{\vabs{A( X_i)-A( Y_i)}^2}}^{1/2},\\
\Wasserstein_{g,r}(\mu, \nu) &\=& \parent{\inf_{( X, Y)} \EE\intervalle{\parent{\sum_{i=1}^n \vabs{A( X_i)-A( Y_i)}^2}^{r/2}}}^{1/r}.
\end{eqnarray*}
An application to the Dyson--Laguerre dynamics is presented in Section \ref{sec:wgdl}.\\

\subsection{Dynamical consequences}

Throughout this section, let $\mu$ and $\nu$ be two probability measures on $\M$, and let $(\P_t)_{t \geq 0}$ be a Markov semigroup associated with the diffusion generator $L$ \eqref{eq:dol}, having invariant measure $\pi$.

\paragraph{Regularization}
In this paragraph, we establish a bound on the relative entropy by the intrinsic Wasserstein distance, in the spirit of \cite[Section 3.4]{CF24}, for diffusion processes under curvature conditions. This property may be interpreted as a Kullback--Leibler regularization: if the initial law is not absolutely continuous with respect to the invariant measure, the relative entropy is infinite at time $t=0$, whereas the Wasserstein distance remains finite whenever the second moment is finite.

The following lemma is a natural consequence of curvature conditions, which are known to imply local log-Sobolev and Poincaré-type inequalities. To establish the regularization property, we adapt the argument from \cite[Lemma 4.2]{BGL} to the Riemannian setting, and use the local Poincaré inequality instead of the local log-Sobolev inequality (see \cite[Section 5]{Ane00} for the terminology). The proof remains valid when using the log-Sobolev inequality.

\begin{lem}[Regularization]\label{lem:reg}
Consider the diagonal setting given by the metric in \eqref{eq:mdc}. If the diffusion satisfies $\C\D(\rho, \infty)$ for some $\rho\in\RR$, then for all $t \> 0$,
\[\Kullback(\mu\P_t \mid \pi) \leq \frac{e^{-2\rho t}}{4 (1 - e^{-2\rho t})} \cdot \Wasserstein_{g,2}^2(\mu, \pi).\]
\end{lem}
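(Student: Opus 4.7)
The plan is to adapt the classical Euclidean regularization inequality of \cite[Lemma 4.2]{BGL} to the intrinsic setting of Subsection~\ref{sec:ssrm}. The key input is the local logarithmic Sobolev inequality associated with $\C\D(\rho,\infty)$: for every smooth positive $f$ and every $s>0$,
\[
\P_s(f\log f)-(\P_s f)\log(\P_s f) \;\leq\; \frac{1-e^{-2\rho s}}{2\rho}\,\P_s\!\left(\frac{\Gamma f}{f}\right),
\]
with the convention $(1-e^{-2\rho s})/(2\rho)=s$ when $\rho=0$. Combined with the Monge--Kantorovich duality \eqref{eq:drwr} for $\Wasserstein_{g,2}$, this ingredient is enough to produce the announced coefficient $e^{-2\rho t}/(4(1-e^{-2\rho t}))$.

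I would follow the reduction already advertised in Subsection~\ref{sec:epov}. In the diagonal case \eqref{eq:mdc}, the change of variables $y=A(x)$, with $A$ an antiderivative of $a$, turns $\Gamma$ into the plain Euclidean squared gradient (as in \eqref{eq:gammaEDL} for the DL process) and the Riemannian distance \eqref{eq:dA} into the Euclidean norm on $A(\M)$. The push-forward by $A$ of the diffusion is therefore a Euclidean diffusion on $A(\M)$ whose carré du champ is the standard one, and the $\Gamma_2$ computation \eqref{eq:gamma2EDL} shows that the curvature-dimension condition $\C\D(\rho,\infty)$ is preserved under this smooth diffeomorphism. The standard Euclidean proof of \cite[Lemma 4.2]{BGL} then yields
\[
\Kullback(\widetilde{\mu}\widetilde{\P}_t\mid \widetilde{\pi})\;\leq\;\frac{e^{-2\rho t}}{4(1-e^{-2\rho t})}\,\Wasserstein_2^2(\widetilde{\mu},\widetilde{\pi}),
\]
where the tildes denote push-forwards by $A$. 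Since $A$ is a smooth bijection, relative entropy is preserved (equality case of Lemma~\ref{lem:contraction}, applied in both directions), and by construction the intrinsic distance $\Wasserstein_{g,2}$ on the original side coincides with the Euclidean $\Wasserstein_2$ on the image side. Pulling the inequality back through $A^{-1}$ delivers the claim.

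A purely intrinsic alternative would start from the Donsker--Varadhan dual formula for $\Kullback(\mu\P_t\mid \pi)$, use the invariance $\pi\P_t=\pi$ to convert the normalization into a $\P_t$-integral, and inject the local log-Sobolev inequality along the Hopf--Lax semigroup built from $d_g$, closing the argument via \eqref{eq:drwr}. The main obstacle in this second route is technical: one must carry out the Hamilton--Jacobi / Hopf--Lax analysis in the Riemannian geometry of Subsection~\ref{sec:ssrm}, and in particular control the commutation between the Hopf--Lax semigroup $Q_s$ and the Markov semigroup $\P_t$. In the diagonal case the explicit geodesic formula \eqref{eq:gef} makes this transparent, which is precisely the geometric reason why the change-of-variables reduction works so smoothly here; the intrinsic argument is what would be required beyond the diagonal setting, where no global antiderivative $A$ is available.
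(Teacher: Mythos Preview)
Your reduction-by-change-of-variables argument is correct and is in fact explicitly acknowledged in Subsection~\ref{sec:epov} as a valid way to recover the regularization inequality. However, the paper's own proof of Lemma~\ref{lem:reg} does \emph{not} take this route: it gives a fully intrinsic argument, redoing the interpolation of \cite[Lemma~4.2]{BGL} directly on the manifold. Concretely, the paper fixes $t>0$, writes $\mathrm{Ent}_\pi(\P_t f)=\int f\,\P_t(\log\P_t f)\,\d\pi$, and interpolates $\phi(s,u(s)):=\P_s(\log\P_{2t-s}f)(u(s))$ along the explicit geodesic $u(s)=\gamma(v(s)/t)$ of \eqref{eq:gef}, with a reparametrization $v$ to be optimized. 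Differentiating (Lemma~\ref{lem:der}), applying the sub-commutation $\Gamma\P_s\le e^{-2\rho s}\P_s\Gamma$ (equivalent to $\C\D(\rho,\infty)$), and using the quadratic inequality $-a^2+2ab\le b^2$ yields a pointwise bound that, after optimizing $v$ and invoking the dual formula~\eqref{eq:drwr}, gives the stated coefficient.

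Two differences are worth flagging. First, the paper deliberately uses the \emph{local Poincaré} (sub-commutation) inequality rather than the local log-Sobolev inequality you invoke; it notes that both work, but makes a point of showing the Poincaré version suffices. Second, your approach is shorter because it treats the Euclidean lemma as a black box, but it is confined to the diagonal setting where a global antiderivative $A$ exists---a limitation you already identify. The paper's intrinsic proof, while still stated under the diagonal hypothesis (it uses \eqref{eq:gef} and \eqref{eq:ngpd}), is written so that the structure visibly generalizes: only the explicit geodesic and the constancy of $|\gamma'(t)|_{\gamma(t)}$ are used, both of which hold on any geodesically complete manifold. Your ``purely intrinsic alternative'' via Hopf--Lax is in spirit closer to what the paper does, though the paper avoids the Hamilton--Jacobi machinery by working directly with the semigroup interpolation.
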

\noindent The proof presented here differs slightly from the one in \cite[Lemma 4.2]{BGL}.
\begin{proof}[Proof of Lemma \ref{lem:reg}]
Fix $t>0$. By time reversibility of the semigroup, we begin by rewriting the entropy \( \text{Ent}_\pi (\P_t f) \) as
\[\text{Ent}_\pi (\P_t f) \;:=\; \int (\P_t f) \log (\P_t f) \, \d\pi \= \int f \P_t (\log \P_t f) \, \d\pi.\]
Our next step is to control \( \P_t (\log \P_t f) \).

Let \( x, y \in \RR^n \), and let $\gamma$ be the geodesic joining $x$ to $y$, as given in \eqref{eq:gef}. Let \( v: [0, t] \to [0, t] \) be a smooth speed function such that \( v(0) = 0 \) and \( v(t) = t \). Define $h \= \log$, and for each $0 \leq s \leq t$, set
\[f_s \;:=\; \P_{2t-s}f, \qquad u (s) \;:=\; \gamma\parent{\frac{v(s)}{t}} \and \phi(s, u(s)) \;:=\; \P_s \parent{ h \circ f_s } (u(s)).\]
By Lemma \ref{lem:der}, the Cauchy--Schwarz inequality and using $\vabs{\nabla_g f}_{g} \= \sqrt{\sum_{i=1}^{n} g_{ii} (g^{ii} \partial_i f)^2} \= \sqrt{\Gamma f}$, we obtain
\begin{eqnarray*}
\frac{\d}{\d s} \phi(s, u(s)) &\=& \P_s \parent{(h''\circ f_s) \Gamma(f_s) } \;+\; \ps{\nabla_g \P_s(h\circ f_s)
, u'(s)}_{u(s)}\\
	&\leq& {\P_s \parent{(h''\circ f_s) \Gamma(f_s) }} \;+\; \vabs{\nabla_g \P_s(h\circ f_s)
	}_{u(s)}.\vabs{u'(s)}_{u(s)}\\
	&\leq& {\P_s \parent{(h''\circ f_s) \Gamma(f_s) }} \;+\; \sqrt{\Gamma \P_s(h\circ f_s)
	}.\vabs{u'(s)}_{u(s)}.
\end{eqnarray*}
Recall from \cite{Ane00} that the curvature-dimension condition $\C\D(\rho, \infty)$ entails a sub-commutation inequality between the semigroup $(\P_s)_{s \geq 0}$ and the carré du champ operator $\Gamma$; namely, for all $s \> 0$,
\begin{equation} \label{eq:lpi} \Gamma \P_s f \leq e^{-2\rho s} \P_{s}\Gamma f.\end{equation}
Although the original proof in \cite{BGL} relies on a local log-Sobolev inequality, it is known that in the case of diffusion operators, the curvature condition, sub-commutation, and weak/strong functional inequalities are equivalent. Hence, the required inequality holds. However, we demonstrate that the argument carries through even when the local log-Sobolev inequality is replaced by a local Poincaré inequality.

Using that $\Gamma(h\circ f_s) \= (h'\circ f_s)^2 \Gamma(f_s)$, we deduce from \eqref{eq:lpi} that
\[\vabs{\nabla_g \P_s(h\circ f_s)}_{u(s)} \= \sqrt{\Gamma \P_s(h\circ f_s)} \leq e^{- \rho s} \sqrt{\P_s\Gamma(h\circ f_s)} \leq e^{- \rho s} \sqrt{\P_s\parent{(h'\circ f_s)^2 \Gamma(f_s)}}.\]
Now observe that $u'(s) \;:=\; {\frac{v'(s)}{t}} \gamma'\parent{\frac{v(s)}{t}}$ and since $\gamma$ is a geodesic, we obtain from \eqref{eq:ngpd} that
\[\vabs{u'(s)}_{u(s)} \= \frac{\vabs{v'(s)}}{t} \vabs{\gamma'\parent{\frac{v(s)}{t}}}_{\gamma\parent{\frac{v(s)}{t}}} \= \frac{\vabs{v'(s)}}{t} d_{g}(x,y).\]
Since $h \= \log$, we thus find
\begin{eqnarray*}
\frac{\d}{\d s} \phi(s, u(s)) &\leq& {\P_s \parent{(h''\circ f_s) \Gamma(f_s) }} + {\frac{d_{g}(x,y)}{t} |v'(s)| e^{- \rho s}}\sqrt{\P_s\parent{(h'\circ f_s)^2 \Gamma(f_s)}}\\
	&\leq& - \P_s \parent{\frac{\Gamma(f_s)}{f_s^2}} + 2 \parent{\frac{d_{g}(x,y)}{2t} |v'(s)| e^{- \rho s}} \sqrt{\P_s \parent{\frac{\Gamma(f_s)}{f_s^2}}}.
\end{eqnarray*}
Using the inequality $-a^2 + 2ab \leq b^2$ for $a,b\in \RR$, and taking
\[a \= \sqrt{\P_s \parent{\frac{\Gamma(f_s)}{f_s^2}}}, \qquad b \= \frac{d_{g}(x,y)}{2t} |v'(s)| e^{- \rho s},\]
we are led to the bound
\[\frac{\d\phi}{\d s} \leq  -a^2 + 2ab \leq b^2 \= \frac{d_{g}(x,y)^2}{4t^2} |v'(s)|^2 e^{- 2 \rho s}.\]
Integrating in time, we get
\[\P_t (\log \P_t f)(x) - \log \P_{2t} f(y) \leq \frac{d_{g}(x,y)^2}{4t^2} \int_0^t |v'(s)|^2 e^{- 2 \rho s} \d s.\]
Denoting $S = 4(e^{2 \rho t}-1)$ and choosing the optimal interpolation speed
\[v(s) = t \frac{e^{2 \rho s} -1}{e^{2 \rho t} -1}, \quad 0 \leq s \leq t,\]
we arrive at
\[\P_t (\log \P_t f)(y) \leq \log \P_{2t} f(x) + \frac{1}{S} d_{g}(x,y)^2.\]
For fixed \( y \), define $\phi = h\circ f_0 =\log \P_{2t} f$. Taking the infimum over \( x \)
\[\P_t (\log \P_t f)(y) \leq \inf_{x} \parent{\phi(x) + \frac{1}{S} d_{g}(x,y)^2} \;=:\; Q_{2,S} \phi(y).\]
However, by Jensen’s inequality
\[\int \phi \, \d\pi = \int \log \P_{2t} f \, \d\pi \leq \log \left(\int \P_{2t} f \, \d\pi \right) = 0,\]
and thus,
\[\P_t (\log \P_t f) \leq Q_{2,S} \phi- \int \phi \, \d\pi.\]
Therefore, taking the supremum over all bounded measurable functions $f$, we deduce from the dual Monge–Kantorovich formulation \eqref{eq:drwr} that
\[\text{Ent}_\pi (\P_t f) = \int f \P_t (\log \P_t f) \, \d\pi \leq \sup_\varphi \left[ \int Q_{2,S} \varphi \, \d\mu - \int \varphi \, \d\pi \right] \= \frac{\Wasserstein_{g,2}(\mu,\pi)^2}{4(e^{2 \rho t}-1)},\]
which concludes the proof.
\end{proof}

The following lemma is a consequence of standard semigroup properties combined with the chain rule. See also \cite{Mon18}.

\begin{lem}\label{lem:der}
Let $t>0$ be fixed, and define for any $s\in[0,t]$ and function $f$,
\[f_s \;:=\; \P_{2t-s}f, \quad \phi(s,y) \;:=\; \P_s \parent{ h \circ f_s } (y) \quad \mtext{and} \quad h \= \log.\]
Then along any smooth curve $\gamma : [0,t] \lra \M$, one has
\[\frac{\d}{\d s} \phi(s, \gamma(s)) \= \P_s \parent{(h''\circ f_s) \Gamma(f_s) } \;+\; \ps{\nabla_g \P_s(h\circ f_s) (\gamma(s)), \gamma'(s)}_{\gamma(s)}.\]
\end{lem}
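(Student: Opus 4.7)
The plan is a direct chain-rule computation: decompose the derivative of $s \mapsto \phi(s,\gamma(s))$ into a partial time derivative at the fixed base point and a spatial derivative at fixed time $s$. Specifically, I would write
\[\frac{d}{ds}\phi(s,\gamma(s)) \= (\partial_s\phi)(s,\gamma(s)) \;+\; \ps{\nabla_g \phi(s,\cdot)(\gamma(s)),\, \gamma'(s)}_{\gamma(s)},\]
and observe that, since $\phi(s,\cdot) \= \P_s(h\circ f_s)$ for each fixed $s$, the spatial piece is already the claimed inner product $\ps{\nabla_g \P_s(h\circ f_s)(\gamma(s)),\, \gamma'(s)}_{\gamma(s)}$. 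The substantive content therefore reduces to identifying $\partial_s \phi(s,y)$.

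For this partial time derivative, I would differentiate the doubly time-dependent expression $\P_s(h\circ f_s)(y)$ via a Leibniz-type splitting,
\[\partial_s \phi(s,y) \= L\P_s(h\circ f_s)(y) \;+\; \P_s\bigl((h'\circ f_s)\,\partial_s f_s\bigr)(y),\]
using $\partial_s \P_s g \= L\P_s g$ for a fixed target $g$. Since $f_s \= \P_{2t-s}f$ satisfies $\partial_s f_s \= -Lf_s$, and since $\P_s$ commutes with $L$, I would rewrite the first term as $\P_s(L(h\circ f_s))(y)$ and invoke the diffusion chain rule for the generator,
\[L(h\circ f_s) \= (h'\circ f_s)\,L f_s \;+\; (h''\circ f_s)\,\Gamma(f_s).\]
The two $(h'\circ f_s)\,Lf_s$ contributions then cancel exactly, leaving $\partial_s \phi(s,y) \= \P_s\bigl((h''\circ f_s)\,\Gamma(f_s)\bigr)(y)$, which produces the first term in the claim.

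There is no real obstacle; the only point requiring care is regularity, so that the Leibniz manipulation, the commutation of $L$ with $\P_s$, and the diffusion chain rule for $L$ applied to $h\circ f_s \= \log f_s$ are all justified. In the intended application (Lemma \ref{lem:reg}), $f$ represents a strictly positive density with respect to $\pi$, and the maximum principle ensures $f_s \> 0$, so the composition with the logarithm and its first two derivatives is meaningful; the remaining ingredients are standard in the Bakry--Émery calculus.
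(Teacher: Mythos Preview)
Your proposal is correct and follows essentially the same route as the paper: the paper also reduces to showing $\partial_s \P_s(h\circ f_s) = \P_s\bigl((h''\circ f_s)\,\Gamma(f_s)\bigr)$, performs the same Leibniz splitting $(\partial_s\P_s)(h\circ f_s)+\P_s\bigl((h'\circ f_s)\,\partial_s f_s\bigr)$, uses $\partial_s f_s=-\G f_s$, applies the diffusion chain rule $\G(h\circ f_s)=(h''\circ f_s)\,\Gamma(f_s)+(h'\circ f_s)\,\G f_s$, and observes the same cancellation. The only cosmetic difference is that the paper writes $(\P_s\G)(h\circ f_s)$ directly rather than your $L\P_s(\cdot)$ followed by commutation, but this is the same step.
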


\begin{proof}[Proof of Lemma \ref{lem:der}]
It suffices to prove that 
\[\frac{\d}{\d s} \P_s (h \circ f_s) \= \P_s \parent{(h''\circ f_s) \Gamma(f_s)}.\]
Using the kernel representation of the semigroup $\P_s (h \circ f_s) = \int (h\circ f_s) \, p_s \d \pi$, differentiating in time and applying the semigroup derivative yields
\begin{eqnarray*}
\frac{\d}{\d s} \P_s (h \circ f_s) &\=& (\partial_s \P_s)(h\circ f_s) \;+\; \P_s\parent{\partial_s (h\circ f_s)}\\
	&\=& (\P_s \G)(h\circ f_s) \;+\; \P_s\parent{(h'\circ f_s) (\partial_s f_s)}\\
	&\=& \P_s \parent{\G(h\circ f_s) \;+\; (h'\circ f_s) (\partial_s f_s)}.
\end{eqnarray*}
Next, observe that \[\partial_s f_s \= \partial_s (\P_{2t-s} f) \= (-1) (\partial_s \P)_{2t-s} f \= - \G \P_{2t-s} f \= -\G f_s.\]
Therefore,
\[\frac{\d}{\d s} \P_s (h \circ f_s) \= \P_s \parent{\G(h\circ f_s) - (h'\circ f_s) (\G f_s)}.\]
Recall that $\G \= (g \Delta - \nabla U \cdot \nabla)$. Applying the chain rule yields \[\nabla (h\circ f_s) \= (h'\circ f_s) \nabla f_s, \qquad \partial_{ii}^2 (h\circ f_s) \= (h''\circ f_s) (\partial_i f_s)^2 + (h'\circ f_s) \partial_{ii}^2 f_s,\] which imply \[\G(h\circ f_s) \= (h''\circ f_s) \Gamma(f_s) + (h'\circ f_s) \G f_s.\]
\end{proof}

\paragraph{Exponential decay}
The following result corresponds to the exponential decay of the intrinsic Wasserstein distance associated with a curved diffusion process. We follow the proof given in \cite{vRS05}, and reproduce it here for the reader's convenience. Note that the converse implication is also established in the same reference when considering Lipschitz functions with respect to the intrinsic Riemannian distance; however, we do not address that direction. The proof relies on the parallel displacement coupling, also known as the Kendall--Cranston coupling, originally introduced in \cite{Ken86a, Cra91} (see also \cite{Wan97b}).

\begin{lem}[Exponential decay]\label{lem:expdec}
For any diffusion satisfying $\C\D(\rho, \infty)$ with $\rho \in \RR$, the following inequality holds for all $t > 0$
\[\Wasserstein_{g,2}(\mu\P_t, \nu\P_t) \leq e^{-\rho t} \cdot \Wasserstein_{g,2}(\mu, \nu).\]
\end{lem}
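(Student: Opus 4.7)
The plan is to realize the contraction at the level of a coupling of the two diffusions, following the parallel displacement strategy of Kendall--Cranston as implemented by von Renesse--Sturm. By the variational formulation \eqref{eq:ebw} of the intrinsic Wasserstein distance, it suffices to construct, for an arbitrary coupling $(X_0,Y_0)$ of $\mu$ and $\nu$, a simultaneous realization $(X_t,Y_t)$ of the diffusion semigroup $(\P_t)_{t\geq0}$ starting from $X_0$ and $Y_0$ respectively such that
\[
\EE\bigl[d_g(X_t,Y_t)^2\bigr] \;\leq\; e^{-2\rho t}\,\EE\bigl[d_g(X_0,Y_0)^2\bigr],
\]
and then to take the infimum over initial couplings, which is attained by an optimal one (the right-hand side being finite otherwise trivial).

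First, I would construct $(X_t,Y_t)$ as the Kendall--Cranston parallel coupling on the Riemannian manifold $(\M,g)$ from Subsection \ref{sec:ssrm}: outside the cut locus, the two Brownian drivers acting on $T_{X_t}\M$ and $T_{Y_t}\M$ are related by parallel transport along the unique minimal geodesic joining $X_t$ to $Y_t$, so that the radial component is shared while the tangential components are mirrored. The drift of $L$ in \eqref{eq:dol} is kept identical in both components. The key dynamical step is then to apply an Itô-type formula to $t\mapsto d_g(X_t,Y_t)^2$ along the coupled process: the quadratic-variation contribution vanishes thanks to parallel coupling, and the drift term is controlled by the second variation of arc length, which under $\C\D(\rho,\infty)$ is bounded above by $-2\rho\,d_g(X_t,Y_t)^2$. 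Concretely, this yields
\[
d\bigl[d_g(X_t,Y_t)^2\bigr] \;\leq\; -2\rho\,d_g(X_t,Y_t)^2\,dt + dM_t,
\]
with $M$ a local martingale. Taking expectations and applying Gr\"onwall's lemma then gives the announced estimate, and passing to the infimum over the initial coupling concludes the proof.

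The principal obstacle is twofold. First, one must justify the above Itô formula at the regularity threshold of $d_g$: the distance is only semiconcave, and it may fail to be smooth on the cut locus of $(\M,g)$. This is handled as in \cite{Ken86a, Cra91, Wan97b} by smooth approximation from above and by showing that the coupled process spends zero time on the cut locus almost surely, so that the correction terms are nonpositive in the sense of distributions. Second, one must verify that the Bakry--\'Emery $\C\D(\rho,\infty)$ hypothesis coming from \eqref{eq:dol} translates into the lower bound $\rho$ on the Bakry--\'Emery Ricci tensor $\Ric_g + \Hess U$, where $U$ is the potential reconstructed from the drift of $L$; this is exactly the content of the correspondence between $\Gamma_2$ and Ricci curvature recalled in \cite[Appendix C]{BGL14}, and it is what allows the second variation formula to combine cleanly with the drift contribution to produce the factor $-2\rho$. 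Once these two technical points are in place, the computation is essentially the one in \cite{vRS05}, adapted mutatis mutandis to the metric $g$ defined in Subsection \ref{sec:ssrm}.
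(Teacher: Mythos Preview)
Your plan is sound and follows the synchronous (parallel-transport) coupling argument of \cite{vRS05}, but your description of the coupling contains a slip that is worth correcting. You correctly specify that the two Brownian drivers are related by parallel transport along the minimizing geodesic; under that coupling \emph{all} components are parallel-transported, not just the radial one. Your phrase ``the tangential components are mirrored'' matches neither the synchronous coupling nor the Kendall--Cranston reflection coupling (in the latter it is the radial component that is mirrored while the tangential ones are parallel-transported). This matters for the computation: the vanishing of the quadratic variation of $r_t:=d_g(X_t,Y_t)$ only needs the radial noises to agree, but the drift bound $-\rho r_t$ (hence $-2\rho r_t^2$ for $r_t^2$) requires the tangential noises to be parallel, so that the relevant Jacobi fields are parallel along $\gamma$ and the index form reduces to $-\int_0^{r_t}\Ric(\dot\gamma,\dot\gamma)$. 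With genuinely mirrored tangential parts, extra positive second-variation terms would appear and the drift bound would fail. Once the description is fixed to ``all components parallel-transported,'' your differential inequality $d(r_t^2)\le -2\rho\,r_t^2\,dt$ holds pathwise away from the cut locus (no local-martingale correction is needed), and Gr\"onwall concludes.

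The paper's own argument is close in spirit but uses a different coupling: the reflection (mirror) construction of Section~\ref{sec:KCc}, under which the radial noise is reflected rather than shared. There the distance process carries a genuine martingale part, $dr_t\le 2\sqrt{2}\,db_t-\rho r_t\,dt$, and the paper compares $r_t$ with an Ornstein--Uhlenbeck process to reach \eqref{eq:eed} before integrating against an optimal initial coupling. Your synchronous-coupling route is the one actually employed in \cite{vRS05} and is arguably cleaner for $\Wasserstein_{g,2}$: it yields the pathwise bound $r_t\le e^{-\rho t}r_0$, from which the second-moment estimate is immediate, whereas the OU comparison delivers $\EE[r_t]\le e^{-\rho t}r_0$ directly and an extra step is needed to pass to $\EE[r_t^2]$.
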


\noindent To prove this property, we employ tools from stochastic differential geometry, using a coupling process via parallel transport; see Section \ref{sec:sdg}. Note that this step can alternatively be shown by a central limit theorem for coupled geodesic random walks.

\begin{proof}
From the Kendall--Cranston coupling (see Section \ref{sec:KCc}), we obtain the following for $X_t$ and $Y_t$ started at two distinct points $x,y \in \M$.
Since the distance process $(d_g(X_t, Y_t))_{t\geq0}$ is bounded above by an Ornstein--Uhlenbeck process in \eqref{eq:dpbaou}, whose expectation at time $t$ is $e^{-\rho t} d_g(X_0, Y_0) = e^{-\rho t} d_g(x, y)$, we obtain
\begin{equation}\label{eq:eed}
\fa t\geq 0, \quad \EE_{(x,y)}\parent{d_g(X_t, Y_t)^r}^{1/r} \leq e^{-\rho t} d_g(x,y).
\end{equation}
Suppose now that $X_0 \sim \mu$, $Y_0 \sim \nu$.
Let $\lambda$ be an optimal coupling of $\mu$ and $\nu$ with respect to $\Wasserstein_{g,r}$, i.e.,
\[\Wasserstein_{g,r}^r(\mu,\nu) \= \int d_g(x,y)^r \lambda(\d x, \d y).\]
Let $\lambda_t \;:=\; \lambda \Q_{2t}$ be a coupling of $\mu\P_t$ and $\nu \P_t$. Hence, using the properties of the Markov semigroup and the equation \eqref{eq:eed}, we get
\begin{eqnarray*}
\Wasserstein_{g,r}^r(\mu\P_t, \nu\P_t) \;\leq\; \int d_g(u,v)^r \lambda_t(\d u, \d v) &=& \iint d_g(u,v)^r \Q_{2t}^{(x,y)}(\d u, \d v) \lambda(\d x, \d y)\\
	&=& \int \EE_{(x,y)} \parent{d_g( X_{2t},  Y_{2t})^r} \lambda(\d x, \d y)\\
	&\leq& e^{- 2 r \rho t} \int d_g(x,y)^r \lambda(\d x, \d y)\\
	&=& e^{- 2 r \rho t} \Wasserstein_{g,r}^r(\mu,\nu).
\end{eqnarray*}
\end{proof}

\subsection{Stochastic differential geometry}\label{sec:sdg}

Recall that \((\M,g)\) is a complete $d$-dimensional Riemannian manifold, so that each tangent fibre $T_x \M$ is Euclidean and is isometric to $\RR^d$.
We consider the second-order diffusion operator defined by \eqref{eq:dol}, equivalently $L = \Delta_g + Z$, where $\Delta_g$ is the Laplace--Beltrami operator and $Z$ a $C^1$-vector field (see \cite[Section C.5]{BGL} and \cite{Wan97b}).

The goal of this section is to construct a stochastic process $(X_t)_{t\geq0}$ on $\M$ whose generator is exactly $L$; see \cite{Ken87}, \cite[Section 2]{Ken86b}, or \cite[Chapter 2]{Hsu02}.

\paragraph{Stratonovich differential}
A basic question is whether one can define the stochastic differential so that a change of coordinates does \emph{not} alter the generator $L$. With Itô calculus this generally fails: if $Y_t = \psi(X_t)$, the Itô correction (a quadratic-variation drift) appears, so the generator of $Y$ is not simply the pushforward of the generator of $X$. In particular, a Brownian motion defined in one chart need not remain Brownian after a nonlinear change of coordinates; Itô calculus is thus not intrinsic on manifolds.
The solution is to adopt a notion of stochastic integration compatible with the geometry: the \emph{Stratonovich} formulation (denoted $\circ \d$). It obeys the usual chain rule and respects smooth coordinate changes. Concretely, for $f\in C^{\infty}(\M,\RR)$,
\begin{equation}\label{eq:dpc}
\d f(X_t) \= (T f)_{X_t} \circ \d X_t,
\end{equation}
where $T f$ is the tangent map (differential/pushforward) of $f$. In this form, Stratonovich SDEs transform by pushforward under diffeomorphisms, so the intrinsic expression of the generator is preserved and we may work directly on $\M$ without tracking charts.

\paragraph{Orthonormal frame bundle}
Formula \eqref{eq:dpc} indicates that $\d X$ should be viewed as a tangent vector at $X_t$, i.e., an element of $T_{X_t}\M$, since the Stratonovich differential behaves as an ordinary differential. 
Recall that the only datum at our disposal is $(X_t)_{t\geq0}$ specified through its generator $L$.
Our stochastic calculus lives in $\RR^d$, while each tangent space $T_{x} \M$ is isometric to $\RR^d$. Thus,
it is natural to choose an $\RR^d$-valued semimartingale $(M_t)_{t\geq0}$ and, for each $X_t \in \M$, an isometry $U_t:\RR^d \lra T_{X_t} \M$ so that
\begin{equation} \label{eq:target}
\d X_t \= U_t \circ \d M_t.
\end{equation}
The process $(U_t)_{t\geq0}$ then takes values in the \textit{orthonormal frame bundle}
\[O(\M) := \bigsqcup_{x\in\M} O_x(\M) \quad \mtext{with} \quad O_x(\M) := \event{u_x : \RR^d \lra T_x \M \textm{linear isometry}}.\]
This bundle carries a natural smooth structure, so it makes sense to treat $(U_t)_{t\geq0}$ as a semimartingale on $O(\M)$.
Informally, $\d X_t$ can be ``pulled back'' through $U_t$ to a Stratonovich differential on $\RR^d$, namely $\d M_t$. In schematic form
\begin{equation*}\label{eq:msvr} M_t \textm{on } \RR^d \xra{\text{moving frame}}{} U_t \textm{on } O(\M) \xra{\text{projection}}{p} X_t \textm{on } \M. \end{equation*}

\paragraph{From $O(\M)$ to $\M$: moving frame}
Following \cite[Section 3.1]{Ken87}, as a first step we explain the dynamics of the process $(U_t)_{t\geq0}$, which is the \textit{moving (orthonormal) frame} attached to the path $X_t$.

\noindent There is a natural projection $p: O(\M) \lra \M$ sending a frame $u_x \in O_x(\M)$ to its base point $x\in\M$. Since $U_t$ is a frame at $X_t$, we must have
\begin{equation}\label{eq:piso}
p(U_t) = X_t.
\end{equation}

\noindent Because $(U_t)_{t\geq0}$ is a semimartingale on $O(\M)$, its differential lives in $T_{U_t} O(\M)$.
The only ingredient we currently have is the differential $(\d X_t)_{t\geq0}$, which takes values in the tangent bundle $T\M$.
Thus, for each isometry $u\in O(\M)$, we must choose a linear map from $T_{p(u)} \M$ to $T_u O(\M)$ that preserves orientation. A moving frame $(U_t)_{t\geq0}$ has two possible types of motion. A \emph{vertical} motion, which is an in-place rotation of the frame (the base point does not move). A \emph{horizontal} motion, which moves the base point along the trajectory by following the connection while preserving its orientation in a geometrically ``natural'' way, so that the axes remain aligned with those of the previous position, i.e., \textit{parallel transport}.
The second motion is determined by the unique natural connection (the Levi-Civita connection) that is torsion-free, metric-compatible, and angle-preserving, and induces the \textit{horizontal lift}, which formally corresponds to the map
\[\H : T \M \lra T O(\M),\]
where for each frame $u_x\in O_x(\M)$, $\H_u : T_x \M \lra T_u O(\M)$. Differentiating \eqref{eq:piso} gives equivalently $(T p) \circ (H_u) = \mathrm{Id}_{T_{p(u)} \M}$ for each $u \in O(\M)$. Thus, we obtain
\[\d U_t = H_{U_t} \circ \d X_t.
\]

\paragraph{From $\RR^d$ to $\M$: diffusion on a manifold}
As a second step we define a semimartingale $(M_t)_{t\geq0}$ in $\RR^d$ so that its stochastic development yields the diffusion $(X_t)_{t\geq0}$ on $\M$ with generator $L$. We set $(M_t)_{t\geq0}$ to satisfy
\[\d M_t \= \sqrt{2} \d B_t + A_t \d t.\]
On the one hand, given the parallel transport $(U_t)_{t\geq0}$, the target relation \eqref{eq:target} yields
\[\d M_t = U_t^{-1}\, \circ \d X_t,\]
where $(U_t^{-1})$ is itself a semimartingale on the appropriate bundle, so the stochastic differential above is well defined.

\noindent On the other hand, by applying the Stratonovich chain rule to $f\in C^\infty(\M,\RR)$ we obtain
\[\d f (X_t) \= (T f)_{X_t} \circ \d X_t \= (\d f)_{X_t} (U_t \circ \d M_t) \= \sum_{i=1}^d (\d f)_{X_t} (U_t . e^i) \circ \d M_t^i,\]
for any fixed Euclidean basis $(e^i)$ of $\RR^d$. As shown in \cite[Section 1]{Cra91}, a standard computation yields
\[\d f (X_t) = \sqrt{2} \sum_{i=1}^d (U_t e^i f) (X_t) \d B_t^i + \parent{\sum_{i=1}^d U_t e^i A_t^i  f (X_t) + \Delta_g f(X_t)} \d t.\]
To ensure that the drift equals $Lf(X_t)$, choose $A_t$ so that its $e^i$-component satisfies $Z^i = U_t e^i A_t^i$; equivalently
\[A_t := U_t^{-1} Z.\]
Consequently,
\[\d M_t \= \sqrt{2} \, \d  B_t + U_t^{-1} Z (X_t) \d t.\]

In summary, the analysis-synthesis approach leads to the following standard frame-bundle formulation of diffusions on $\M$ for $X_0 = x$,
\begin{equation}\label{eq:sdg}
\begin{acc}
$\dps \d M_t \= \sqrt{2} \, \d  B_t + U_t^{-1} Z (X_t) \d t$,\\
$\dps \d U_t \= H_{U_t} U_t \circ \d M_t$,\\
$\dps X_t \= p(U_t)$.
\end{acc}
\end{equation}

\subsection{Coupling by parallel displacement (Kendall--Cranston coupling)}\label{sec:KCc}

The basic idea of coupling methods is to realize two random processes on a common probability space so that each component has the prescribed marginal law.
Working on a shared space exposes the joint distribution and allows the components to interact through common events (e.g. ``$X=Y$''), thereby revealing their relative behavior.

Let \( (X_t) \) and \( (Y_t) \) be Markov processes on a state space $\M$ with transition semigroup $(\P_t)_{t\geq0}$ and initial laws $X_0 \sim \mu$, $Y_0 \sim \nu$. A \emph{coupling} is a Markov process $(\widetilde{X}_t, \widetilde{Y}_t)_{t\ge0}$ on the product space \( (\M \times \M, \PPP \times \PPP) \) such that, for each $t$, the marginals are $\mu \P_t$ and $\nu \P_t$.
In particular, \( (X_t) \) and \( (Y_t) \) need not be defined on the same probability space, but \( (\widetilde{X}_t) \) and \( (\widetilde{Y}_t) \) must be.

There are several standard couplings (e.g. independent, synchronous couplings), tailored to different objectives.
A classical goal is to arrange for the two processes to meet and then evolve together--this is a \emph{coalescent coupling} (also called \textit{successful}), meaning that the paths coincide from some time onward.
The quantity encoding this is the \emph{coupling time}, defined by the random variable
\[T \;:=\; \inf \event{t > 0 \;:\; X_t \= Y_t}.\]
This time can be infinite in certain settings (e.g., the synchronous coupling of two Brownian motions on $\RR^d$ with $d\geq2$). In contrast, reflection-type constructions can make $T$ finite under suitable conditions (see e.g., \cite{LR, Ebe16}).\\

Reflection coupling is designed to produce a coupling in which the two processes eventually meet, i.e., a \emph{coalescent} (successful) coupling. The guiding idea is to drive the second process so that, up to the meeting time, its trajectory is the mirror image of the first. However, specifying the reflected construction alone does not, by itself, ensure coalescence in all settings; additional structural assumptions on the dynamics or the geometry are typically required.

Intuitively, in $\RR^d$ the relevant geometry as the two processes approach each other is the line through $X_t$ and $Y_t$. We wish to steer $Y_t$ toward $X_t$ along this line, that is, along the direction $-v_t$, where $v_t := \overrightarrow{X_tY_t}/ \norme{Y_t - X_t}{}$ is the unit vector from $X_t$ to $Y_t$. Since changing the drift or the diffusion matrix would alter the generator, the only degree of freedom is the driving Brownian motion. To construct the ``mirrored'' noise $(\widetilde B_t)_{t\ge0}$, fix an orthonormal basis $(e_1:=v_t, e_2,\dots,e_d)$ and flip the $e_1$--component of the noise:
\[\d \widetilde{B}_t \;:=\; \sum_{i=2}^d \ps{B_t, e_i} e _i - \ps{B_t, e_1} e_1 \= \d B_t - 2 \ps{B_t, e_1} e_1 \= (I_d - 2 e_1 e_1^\top) \d B_t.\]
This way of reversing the component along the preferred direction $-e_1$ is straightforward in Euclidean space and extends to Riemannian manifolds via the \emph{mirror map}; in $\RR^d$ is simply $m_{X_t,Y_t}: z \lms (I_d - 2 e_1 e_1^\top)\, z$).

Even in $\RR^d$, an independent coupling of two Brownian motions need not yield coalescence.
In dimension $d=1$, coalescence does occur: the difference $(B_t-\widetilde B_t)_{t\ge0}$ is a one-dimensional Brownian motion, which is recurrent and hence hits every point, in particular $0$, almost surely.
For $d=2$, coalescence need not occur: although the difference is a two-dimensional Brownian motion, planar Brownian motion is recurrent while singletons are polar, so it almost surely does not hit a fixed point (in particular, $0$).
For $d \geq 3$, Brownian motion is transient, so the question does not even arise.

Because the diffusion has a non-constant diffusion matrix, the classical Euclidean reflection coupling does not apply. We therefore need a coupling adapted to the manifold's geometry. The coupling by parallel displacement, also known as Kendall--Cranston coupling, provides a suitable construction; see \cite{Ken86a, Cra91} (or \cite{Wan96, CW94}).

The Euclidean construction relies on the uniqueness of the minimizing path from $X_t$ to $Y_t$. If two distinct minimizers existed, which ``mirror direction'' would be chosen? In Euclidean space this issue never arises, but on a Riemannian manifold it can occur (for example between antipodal points on a sphere). We therefore proceed in two steps: (i) on the region where the minimizing geodesic is unique, we define the mirror map that yields the symmetric Brownian motion; (ii) when several minimizing geodesics may exist, that is, on the cut locus, we adapt the construction to handle this non-uniqueness.

\paragraph{Mirror map: case of a unique minimizing geodesic} Fix a time $t$ at which there is a unique minimizing geodesic $\gamma_t$ from $X_t$ to $Y_t$.
As in the Euclidean setting, the reflection coupling is determined by two ingredients: the Riemannian Brownian noise (the only degree of freedom), and the unit tangent vector \(v_t\).
The \emph{Riemannian} Brownian motion associated with $(X_t)_{t\geq0}$ defined by \eqref{eq:sdg} is
\[\d B_t^g := U_t \circ \d B_t \in T_{X_t }\M,\]
where \((B_t)_{t\ge 0}\) is a Brownian motion in \(\RR^d\) (see \cite[Section 2]{Ken87}).
The \emph{unit tangent vector} associated with \((X_t)_{t\ge 0}\) and \((Y_t)_{t\ge 0}\) is
\[v_t := \dot{\gamma}_t(0) \in T_{X_t} \M.\]

The mirror map is constructed in two steps.
First, transport the unit tangent vector $v_t \in T_{X_t} \M$ along the minimizing geodesic $\gamma_t$ in a geometrically consistent way (parallel transport) to obtain the unit vector $v_t' \in T_{Y_t} \M$.
Second, ``reverse'' the component of the Riemannian Brownian motion in that direction, by reflecting across the hyperplane $\H \subset T_{Y_t} \M$ orthogonal to $v_t'$. The resulting unit vector is denoted $m_{X_t, Y_t} (v_t) \in T_{Y_t} \M$ and defines the mirror map.
Note that the mirror map $m_{x,y}$ is an isometry from $T_x \M$ onto $T_y \M$.
Schematically:
\begin{figure}[h!]
\centering
\includegraphics{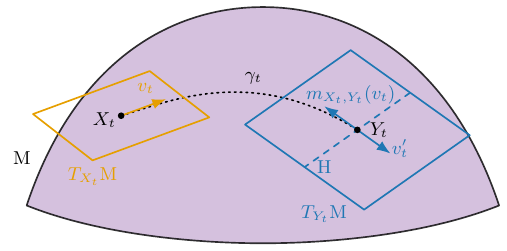}
\caption{Mirror map.}
\label{Mirror map}
\end{figure}

Thus the Riemannian Brownian noise for \(Y_t\) is defined by $\d \widetilde{B}_t^g := m_{X_t, Y_t} \circ \d B_t^g$.
More generally, linking this to \(\RR^d\) martingales, we define the Brownian motion $(\widetilde{B}_t)_{t\geq0}$ in \(\RR^d\) from which the semimartingale $(N_t)_{t\geq0}$ driving \(Y_t\) arises via
\[\d B_t \in \RR^d \xra{U_t}{} T_{X_t} \M \xra{m_{X_t,Y_t}}{} T_{Y_t} \M \xra{V_t^{-1}}{} \RR^d \ni \d \widetilde{B}_t.\]
In particular, since $V_t^{-1} m_{X_t,Y_t} U_t$ is an isometry of $\RR^d$, we have that $(\widetilde{B}_t)_{t\geq0}$ is a standard Brownian motion in \(\RR^d\).

Given \((X_t)_{t\ge0}\) solving \eqref{eq:sdg} from \(x\), we couple it with \((Y_t)_{t\ge0}\) that satisfies the same dynamics but is driven by the reflected Brownian noise \((\widetilde{B}_t)_{t\geq 0}\).
Since it is built from the differential of the exponential map, on the domain where this is a diffeomorphism, it follows that the dependence of $m_{x,y}$ on $x$ and $y$ is smooth whenever $x\neq y$ and we are away from the cut locus (see \cite{Ken86a}).
This system admits a unique solution up to the first entrance into the cut locus or until coalescence.

\paragraph{Cut locus: case of multiple minimizing geodesics}
On some Riemannian manifolds, fixing a starting point $x$, there exist endpoints that can be reached by more than one minimizing geodesic. For instance, on the sphere, the antipode of $x$ is the canonical example. The set of all such points is the \textit{cut locus} of $x$, denoted $C(x)$.

Since the reflection coupling is built entirely from minimizing geodesics, the construction breaks down when $Y_t$ reaches the cut locus of $X_t$. At that time the minimizing geodesic is not unique, the reflection is no longer well defined, since multiple mirror directions are possible. In addition, the distance function $d_g$ loses regularity on the cut locus.

\paragraph{Kendall--Cranston coupling}
A coupling of solutions to the stochastic differential equation on a manifold $\M$ defined by \eqref{eq:sdg} is constructed as follows. Fix initial data $(x,y)$ with $x \neq y \in \M$ and $y \notin C(x)$.

\noindent The process \((X_t)_{t\ge0}\) is the solution to \eqref{eq:sdg} started at \(x\).

\noindent The process \((Y_t)_{t\ge0}\) started at \(y\) is defined by the following alternating scheme:
$(i)$ for all $t<T$ with $Y_t \notin C(X_t)$, $Y_t$ evolves with the same dynamics as $(X_t)_{t \geq 0}$ but driven by the reflected Brownian noise \((\widetilde{B}_t)_{t\geq 0}\)
\[
\begin{acc}
$\dps \d N_t \= \sqrt{2} \d \widetilde{B}_t + V_t^{-1} Z ( Y_t) \d t, \quad \d \widetilde{B}_t \= V_t^{-1} m_{X_t, Y_t} U_t \, \d  B_t$,\\
$\dps \d V_t \= H_{V_t} \circ \d Y_t$,\\
$\dps  Y_t \= p(V_t), \quad  Y_0 \= p(V_0)$,
\end{acc}
\]
$(ii)$ while \(Y_t\in C(X_t)\), \(Y_t\) evolves with the same dynamics as $(X_t)_{t \geq 0}$ but driven by an independent Brownian motion, until the first exit time from the cut locus,
$(iii)$ repeat steps $(i)$ and $(ii)$ alternately until the coupling time $T$, and for $t \geq T$, set $Y_t = X_t$.

\paragraph{Coalescence of the coupling}
Recall that coalescence is characterized by the almost sure finiteness of the coupling time \(T\) defined above. A natural way to study \(T\) is via the \emph{distance process} \(r_t := d_g(X_t,Y_t)\), since
\[T := \inf\event{t>0, \; r_t = d_g(X_t, Y_t) = 0}.\]
Accordingly, we study the decay of \((r_t)_{t\ge0}\) toward a possible hit of \(0\). The approach is to dominate \(r_t\) by a suitable continuous martingale that hits \(0\) almost surely when the Ricci curvature of \(\M\) is nonnegative. This upper bound follows from a stochastic analysis of the one-dimensional nonnegative process \((r_t)_{t\ge0}\).

First, the following result quantifies whether the two processes move closer.
Itô calculus applied to the distance process introduces a local time of $(Y_t)_{t \geq 0}$ on the cut locus of $(X_t)_{t \geq 0}$. Away from the cut locus, the process $(X_t, Y_t)_{t\geq 0}$ is a diffusion on $\M\times\M$ whose generator is derived from the first and the second variation of arc length; see \cite[Section 1]{Cra91}, \cite[Lemma 2.1]{CW94}, or \cite{Wan96}). Hence, before time $T$
\[\d r_t \= 2 \sqrt{2} \d b_t + \parent{\int_{0}^{r_t} \sum_{i=2}^d |\nabla_{v_t} J^i |^2 - \ps{R(J^i, v_t) v_t, J^i} + \ps{\nabla_{v_t} Z, v_t} } \d t - \d L_t,\]
where $b_t$ is a one-dimensional Brownian motion,
$(L_t)_{t\geq0}$ is an increasing process with support contained in $\event{t\geq0 : Y_t \in C(X_t)}$, $\d t$ taken to be $0$ when $Y_t \in C(X_t)$ so that when $Y_t \notin C(X_t)$, $\gamma_t : [0,r_t] \lra \M$ represents the unique shortest geodesic between $X_t$ and $Y_t$ with unit tangent vector $v_t$ and $(J^i)_{2\leq i \leq d}$ are Jacobi fields along $\gamma_t$.

Second, we use Riemannian curvature properties
\[\int_{0}^{r_t} \sum_{i=2}^d |\nabla_{v_t} J^i |^2 - \ps{R(J^i, v_t) v_t, J^i} \leq - \int_0^{r_t} \Ric(v_t,v_t).\]
Moreover, from \cite[Section C.6]{BGL14}, the curvature-dimension condition $\C\D(\rho, \infty)$ holds for the generator $L$ in \eqref{eq:dol} if and only if $\Ric - \nabla_U Z \geq \rho g$.
Hence, since $v_t$ is a unit vector (i.e., $\ps{v_t,v_t}_g = \vabs{v_t}_{g}^2 = 1$), we obtain
\begin{equation}\label{eq:dpbaou}
\d r_t \leq 2 \sqrt{2} d b_t - \rho r_t \d t.
\end{equation}
Consequently, the distance process is bounded above by an Ornstein--Uhlenbeck process with initial condition $d_g(x,y) > 0$ (consider the process $\d Z_t = 2 \sqrt{2} \d b_t - \rho Z_t \d t$, and apply Grönwall's lemma to $Z_t - r_t$).
When $\rho \geq 0$, the one-dimensional Ornstein--Uhlenbeck process is recurrent, so it hits $0$ in finite time with probability one.
Since $r_t \geq 0$, it follows that the distance process hits $0$, i.e., $X_t = Y_t$, in finite time almost surely.
Therefore, the coupling is coalescent under nonnegative curvature.

\end{appendix}

\bibliography{bibliographie_laguerre}
\bibliographystyle{abbrv}

\end{document}